\newcommand{\cit}[1]{{\rm \textbf{#1}}}
\newcommand{\Ref}[2]{\cit{%
\ifthenelse{\equal{#1}{thm}}{Theorem}{}%
\ifthenelse{\equal{#1}{prop}}{Proposition}{}%
\ifthenelse{\equal{#1}{lem}}{Lemma}{}%
\ifthenelse{\equal{#1}{cor}}{Corollary}{}%
\ifthenelse{\equal{#1}{defn}}{Definition}{}%
\ifthenelse{\equal{#1}{oss}}{Remark}{}%
\ifthenelse{\equal{#1}{sec}}{Section}{}%
\ifthenelse{\equal{#1}{subsec}}{Subsection}{}%
\ifthenelse{\equal{#1}{ex}}{Example}{}%
\ifthenelse{\equal{#1}{conj}}{Conjecture}{}%
\ifthenelse{\equal{#1}{ssec}}{Subsection}{}%
\ifthenelse{\equal{#1}{tab}}{Table}{}%
\ifthenelse{\equal{#1}{cla}}{Claim}{}%
\  \ref{#1:#2}%
}}
\theoremstyle{plain} 
\newtheorem{prop}{Proposition}[section]
\newtheorem{thm}[prop]{Theorem}
\newtheorem{lem}[prop]{Lemma} 
\newtheorem{cor}[prop]{Corollary}
\theoremstyle{remark}
\newtheorem{oss}[prop]{Remark}
\newtheorem{ex}[prop]{Example}
\theoremstyle{definition}
\newcommand{\hk}{hyperk\"{a}hler }
\newcommand{\kahl}{K\"{a}hler }
\newcommand{\ktipo}{$K3^{[2]}$ type}
\newcommand{\kntipo}{$K3^{[n]}$ type}
\newcommand{\kntiposp}{$K3^{[n]}$ type }
\DeclareMathOperator*{\rk}{rk}
\newcommand{\NS}{\mathrm{NS}}
\newcommand{\Z}{\mathbb{Z}}
\newcommand{\C}{\mathbb{C}}
\newcommand{\Q}{\mathbb{Q}}
\newcommand{\PP}{\mathbb{P}}
\let\div\undefined
\DeclareMathOperator{\div}{div\!}
\newcommand{\Ker}{\mathrm{Ker}}
\newcommand{\aut}{\mathrm{Aut}}
\newcommand{\id}{\mathrm{id}}
\renewcommand{\mod}[1]{\mathrm{\text{ }(mod\text{ }#1)}}
\newcommand{\lat}[1]{\langle#1\rangle}
\newcommand\numberthis{\addtocounter{equation}{1}\tag{\theequation}}
\begin{document}

\title{Automorphisms of generalised Kummer fourfolds}
\author{Giovanni Mongardi}
\email{giovanni.mongardi@unimi.it}
\address{Department of Mathematics, University of Milan\\ via Cesare Saldini 50, Milan, Italy}
\author{K\'evin Tari}
\email{kevin.tari@math.univ-poitiers.fr}
\address{Laboratoire de Math\'ematiques et Applications, Universit\'e de Poitiers\\
T\'el\'eport 2 - BP 30179
Boulevard Marie et Pierre Curie\\
86962 Futuroscope Chasseneuil Cedex,
France }
\author{Malte Wandel}
\email{wandel@kurims.kyoto-u.ac.jp}
\address{RIMS, Kyoto University\\ Kitashirakawa Oiwake cho, Sakyo-ku,
Kyoto 606-8502, Japan}

\begin{abstract}
We classify non symplectic prime order automorphisms and all finite order symplectic automorphism groups of generalised Kummer fourfolds using lattice theory and recent results on ample cones and monodromy groups. We study various geometric realisations of the automorphisms obtained in the classification. In the case of higher dimensional generalised Kummers, we provide full results for finite symplectic automorphisms groups and we sketch the classification for prime order non-symplectic groups.
\end{abstract}
\classification{14J50, 14K99}
\keywords{Generalised Kummer, Automorphisms}
\thanks{First named author was supported by FIRB 2012 ``Spazi di Moduli e applicazioni''\\Third named author is supported by JSPS Grant-in-Aid for Scientific Research (S)25220701}

\maketitle
\tableofcontents
\setcounter{section}{-1}
\section{Introduction}
In recent years a lot of attention was drawn to the study of hyperk\"ahler manifolds and their automorphisms. After Beauville's (\cite{Beau83} and Huybrechts' (\cite{Huy99}) fundamental results, most of these developments studied various aspects of automorphisms of Hilbert schemes of points on K3 surfaces and their deformations (\cite{BCMS14}, \cite{BCNS14}, \cite{BCS14}, \cite{BNS13}, \cite{BS12}, \cite{Boi12}, \cite{Cam12}, \cite{Mon13}, \cite{OW13}). The other famous series of deformation classes -- the so-called \em generalised Kummer manifolds \em or \em hyperk\"ahler manifolds of Kummer type \em -- has been only been treated in very few articles: In \cite{BNS11} the authors determined the kernel of the cohomological representation, which associates to every automorphism of a hyperk\"ahler manifold the induced action on the second integral cohomology lattice. Oguiso (\cite{Ogi12}) further studied the automorphisms in this kernel, proving that they, in fact, act non-trivially on the total cohomology of the underlying manifolds. In \cite{OS11} examples of fixed point free automorphisms on generalised Kummer manifolds are presented. In \cite{MW14}, the so-called \em induced automorphisms \em have been introduced. This construction starts with a group homomorphism of a 2-torus, which then, under certain conditions, induces an automorphism of the Albanese fibre of certain moduli spaces of stable objects on the torus, which are hyperk\"ahler manifolds of Kummer type.

One of the first tasks concerning classifications of automorphisms of hyperk\"ahler manifolds is the classification of prime order automorphisms. Such an automorphism is either \em symplectic \em, i.e.\ preserving the symplectic structure of the manifold, or \em non-symplectic \em otherwise. After dividing prime order automorphisms into these two groups, the classification is usually done using lattice theory. The case of K3 surfaces has been done by \cite{AST11} for the non-symplectic case and \cite{GS07} in the symplectic case. In the case of K3$^{[2]}$-type manifolds we have the results of \cite{BCS14} and \cite{MW14} in the non-symplectic case and \cite{Mon14} in the symplectic case.

The main aim of this article is to provide a first step towards a classification of automorphisms of prime order in the case of generalised Kummer fourfolds. We give a lattice theoretic classification in both, the symplectic and the non-symplectic case, using recent developments concerning ample cones and monodromy groups of generalised Kummer manifolds (cf.\ \cite{Mon13b}). Furthermore we analyse the lattice theoretic result by studying the corresponding geometric realisations if available. We use the survey \cite{MTW15} as a main source to study natural and induced automorphisms on Kummer fourfolds.

\subsection*{Acknowledgements} 
We would like to thank the Max Planck Institute in Mathematics, for hosting the three authors when this work was started. We are also grateful to Samuel Boissi\`ere and Bert Van Geemen for their comments and support. The first named author would also like to thank G. H\"ohn for his many insightful comments on this paper.

\section{Preliminaries}\label{sec:prelim}
In this first section, we gather the required background material and fix some notation.

\subsection{Generalised Kummer fourfolds}
Let $A$ be a complex 2-torus and denote by $A^{[3]}$ the Hilbert scheme of three points on $A$. The fibres of the isotrivial Albanese map $A^{[3]}\rightarrow A$ are hyperk\"ahler manifolds of dimension four known as \em generalised Kummer fourfolds. \em We call \em fourfolds of Kummer type \em all hyperk\"ahler deformations of generalised Kummer fourfolds.

Let $X$ be a fourfold of Kummer type, then $H^2(X,\mathbb{Z})=U^3\oplus \lat{-6}$, where the pairing on $H^2$ is the Beauville-Bogomolov-Fujiki form.

By \cite{BNS11}, the cohomological representation
\[\nu\colon \aut(X)\rightarrow O(H^2(X,\Z))\]
has kernel isomorphic to $A[3]\rtimes\pm\id_A$. We introduce the following convention: An automorphism of $X$ is said to be an \em automorphism of order $n$ \em if the induced action on $H^2$ has order $n$.

The discriminant group of $H^2(X,\mathbb{Z})$ is isomorphic to $\Z/6\Z$ and will be denoted by $A_X$. For any lattice $L$ and any element $v\in L$ we denote by $\div_L(v)$ the divisibility of $v$ in $L$, i.e.\ the positive generator of the ideal $(v,L)$.

\subsection{Monodromy and ample cone}
In order to study automorphisms, it is vital to understand the birational geometry of our manifolds. The latter is governed by \em parallel transport operators \em or \em monodromy operators, \em i.e.\ isometries of $H^2$ which are induced by parallel transport along a connected base. We have the following Hodge-theoretic Torelli Theorem due to Huybrechts, Markman and Verbitsky.

\begin{thm}[{\cite[Thm.\ 1.3]{Mar09}}]\label{thm:torelli}
Let $X$ and $X'$ be hyperk\"ahler manifolds. Then $X$ and $X'$ are birational if and only if there is a parallel transport operator $H^2(X',\Z)\rightarrow H^2(X,\Z)$ which is a Hodge isometry.
\end{thm}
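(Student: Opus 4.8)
The plan is to prove the two implications separately, the forward one being essentially formal and the converse resting on the global Torelli theorem. For the ``only if'' direction, suppose $f\colon X\dashrightarrow X'$ is birational. Since hyperkähler manifolds have trivial canonical bundle, $f$ restricts to an isomorphism between open subsets $U\subset X$ and $U'\subset X'$ whose complements have codimension at least two; hence the restriction maps identify $H^2(X,\Z)\cong H^2(U,\Z)$ and $H^2(X',\Z)\cong H^2(U',\Z)$, and $f$ induces an isometry $f^*\colon H^2(X',\Z)\to H^2(X,\Z)$ which is automatically a Hodge isometry (the Beauville--Bogomolov--Fujiki form and the Hodge filtration are detected on the open part). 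To see that $f^*$ is a parallel transport operator one invokes Huybrechts' theorem that birational hyperkähler manifolds are deformation equivalent: by deforming a Kähler class of $X'$ towards a wall of its Kähler cone one produces a family over a disk whose very general fibre is simultaneously a small deformation of $X$ and of $X'$, and along which $f^*$ is realised as monodromy.

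For the ``if'' direction let $g\colon H^2(X',\Z)\to H^2(X,\Z)$ be a parallel transport operator that is a Hodge isometry. Fix an abstract lattice $\Lambda\cong H^2(X,\Z)$ and a marking $\eta'\colon H^2(X',\Z)\xrightarrow{\ \sim\ }\Lambda$, and put $\eta:=\eta'\circ g^{-1}$, a marking of $X$. Because $g$ is a Hodge isometry the marked pairs $(X,\eta)$ and $(X',\eta')$ have the same period point in the period domain $\Omega_\Lambda$, and because $g$ is a parallel transport operator they lie in the same connected component $\mathfrak{M}^{\circ}$ of the moduli space of marked hyperkähler manifolds. Verbitsky's global Torelli theorem now says that the period map $\mathfrak{M}^{\circ}\to\Omega_\Lambda$ is injective after passing to the non-Hausdorff reduction, so $(X,\eta)$ and $(X',\eta')$ are inseparable points of $\mathfrak{M}^{\circ}$; and Huybrechts' analysis of such inseparable points shows that the underlying manifolds are then bimeromorphic, hence birational since $X$ and $X'$ are projective.

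The genuine obstacle is the input from Verbitsky's global Torelli theorem, whose proof is itself substantial: it requires understanding the action of the mapping class group on the Teichmüller space, showing via twistor-line arguments that the period map is a covering onto the complement of a locus of ``special'' periods, and invoking ergodicity to upgrade this to injectivity on the Hausdorff quotient. The companion ingredient --- Huybrechts' description of the non-separatedness of $\mathfrak{M}^{\circ}$ in terms of birational modifications along rational curves --- is the other serious point. Everything else (the codimension-two extension of a birational map, the compatibility of $f^*$ with markings and with parallel transport, and passing from ``bimeromorphic'' to ``birational'' in the projective setting) is routine.
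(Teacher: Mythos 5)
The paper does not prove this statement: it is imported verbatim from Markman's survey \cite[Thm.\ 1.3]{Mar09}, so there is no in-paper argument to compare against. Your outline reproduces the standard proof correctly --- the forward direction via the codimension-two isomorphism and Huybrechts' theorem that birational hyperk\"ahler manifolds are deformation equivalent (which realises $f^*$ as a parallel transport operator), and the converse via Verbitsky's global Torelli theorem combined with Huybrechts' identification of inseparable marked pairs with bimeromorphic manifolds --- and it accurately locates where the genuine content lies.
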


The group of parallel transport operators (denoted $Mon^2(X)$) for Kummer fourfolds has been computed by Markman and Mehrotra \cite[Corollary 4.8]{MM12}:

\begin{prop}[{[Markman-Mehrotra]}]\label{prop:mono_kum}
Let $\mathcal{W}(X)$ be the group of orientation preserving isometries of $H^2(X,\mathbb{Z})$. Let $\mathcal{N}(X)$ be the kernel of the map $\mathrm{det}\circ\chi\colon\mathcal{W}(X)\rightarrow {\pm 1}$, where $\chi$ is the character of the action on $A_X$. Then $Mon^2(X)=\mathcal{N}(X)$
\end{prop}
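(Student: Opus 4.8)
The plan is to reduce the computation of $Mon^2(X)$ to a question about isometries of the Mukai lattice of an abelian surface, where both inclusions $Mon^2(X)\subseteq\mathcal{N}(X)$ and $\mathcal{N}(X)\subseteq Mon^2(X)$ become tractable. By Yoshioka's work on moduli of sheaves on abelian surfaces, the generalised Kummer fourfold $K_2(A)$ (a fibre of $A^{[3]}\to A$) is the Albanese fibre $K_v(A)$ of a moduli space $M_v(A)$ of Gieseker-stable sheaves with primitive Mukai vector $v\in\tilde H(A,\Z)$, $v^2=6$, and the Mukai morphism gives a Hodge isometry $v^\perp\cong H^2(K_v(A),\Z)$ inside $\tilde H(A,\Z)\cong U^4$. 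Since every fourfold of Kummer type is by definition deformation equivalent to such a $K_2(A)$, and $Mon^2$ is a deformation invariant, it suffices to compute $Mon^2$ in this model; isometries can then be transported between models along families and along Fourier--Mukai transforms.

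For the first inclusion $Mon^2(X)\subseteq\mathcal{N}(X)$: every parallel transport operator preserves the natural orientation on the positive three-dimensional subspaces of $H^2(X,\R)$, a general property of hyperk\"ahler monodromy, so $Mon^2(X)\subseteq\mathcal{W}(X)$. It then remains to see that a monodromy operator $g$ satisfies $\det(g)\,\chi(g)=1$. I would argue that $g$ lifts to an isometry $\tilde g$ of $\tilde H(A,\Z)$ that stabilises $\{\pm v\}$ and lies in the image of the group of derived autoequivalences of $D^b(A)$ in $O(\tilde H(A,\Z))$; since $\tilde H(A,\Z)$ has signature $(4,4)$, this image is a proper subgroup cut out by a $\{\pm1\}$-character combining the determinant and the orientation behaviour. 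Restricting along the orthogonal decomposition $\langle v\rangle\oplus v^\perp\subset\tilde H(A,\Z)$ and keeping track of signs -- using that $\langle v\rangle$ is positive definite of rank one, so that $-\id$ on it records the sign $\chi(g)$ of $g$ on $A_X$ -- converts that character into the relation $\det(g)=\chi(g)$ on $v^\perp\cong H^2(X,\Z)$.

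For the converse $\mathcal{N}(X)\subseteq Mon^2(X)$ one must exhibit enough parallel transport operators. I would assemble them from three sources: (i) automorphisms of $A$, and of the universal family over the moduli of polarised abelian surfaces, which induce automorphisms of $A^{[3]}$ restricting to the Albanese fibre; (ii) Fourier--Mukai transforms and wall-crossing identifications among the various moduli spaces $M_{v'}(A')$ that are deformation equivalent to $X$, producing isometries realised by parallel transport along families; and (iii) isometries coming from birational self-maps, available through \Ref{thm}{torelli}. The remaining point is lattice-theoretic: using Eichler's criterion for the unimodular lattice $U^4$, which has many isotropic vectors, one shows that the stabiliser of $\{\pm v\}$ inside the autoequivalence subgroup of $O(\tilde H(A,\Z))$, restricted to $v^\perp$ and supplemented by the operators from (i) and (iii), generates all of $\mathcal{N}(X)$.

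The main obstacle is exactly this last combination: correctly identifying the image of $\mathrm{Auteq}(D^b(A))$ in $O(\tilde H(A,\Z))$, tracking the orientation and determinant signs through the three lattices $\langle v\rangle$, $v^\perp$ and $\tilde H(A,\Z)$, and then running the Eichler-criterion argument to obtain \emph{all} of $\mathcal{N}(X)$ rather than some proper subgroup. One must also invoke Yoshioka's deformation-equivalence results with care, to be sure that the moduli spaces $M_{v'}(A')$ used in step (ii) all lie in the Kummer-fourfold deformation class. The orientation-preservation of monodromy, the smoothness and non-emptiness of the $M_v(A)$, and the Hodge isometry $v^\perp\cong H^2$ are standard input.
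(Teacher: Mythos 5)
This proposition is not proved in the paper at all: it is quoted verbatim from Markman--Mehrotra \cite[Corollary 4.8]{MM12}, so there is no internal argument to compare yours against. Your sketch does point at the same circle of ideas used in the literature (realising $X$ as an Albanese fibre $K_v(A)$ with $v^\perp\subset\tilde H(A,\Z)\cong U^4$, deformation invariance of $Mon^2$, orientation preservation of monodromy, and generation of monodromy operators from automorphisms, Fourier--Mukai equivalences and birational maps), so the strategy is not wrong in outline.

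However, as a proof it has two genuine gaps, both of which you yourself flag as ``the main obstacle.'' First, the upper bound $Mon^2(X)\subseteq\mathcal{N}(X)$ rests entirely on the assertion that the image of $\mathrm{Auteq}(D^b(A))$ in $O(\tilde H(A,\Z))$ is ``cut out by a $\{\pm1\}$-character combining the determinant and the orientation behaviour,'' and that restricting to $v^\perp$ converts this into $\det(g)=\chi(g)$. That character is precisely the monodromy invariant whose existence is the substance of the theorem; for abelian surfaces it does not come for free from the signature $(4,4)$ of $\tilde H(A,\Z)$ but from the compatibility of the lift with the action on the odd cohomology $H^1(A)\oplus H^1(\hat A)$ (this is where Markman--Mehrotra do real work), and your argument gives no construction of it, nor a proof that every monodromy operator lifts to an autoequivalence in the first place. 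Second, the lower bound $\mathcal{N}(X)\subseteq Mon^2(X)$ is reduced to ``using Eichler's criterion \dots one shows that \dots\ generates all of $\mathcal{N}(X)$,'' but Eichler's criterion applies to $v^\perp$ only after one knows which isometries of the stabiliser of $\{\pm v\}$ are actually realised by parallel transport, and the generation statement is asserted rather than carried out. In short: the skeleton matches the known proof, but both decisive steps are placeholders, so this cannot be accepted as a proof; in the context of this paper the correct move is simply to cite \cite[Cor.\ 4.8]{MM12}, as the authors do.
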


We call a \em marking \em of a Kummer fourfold $X$ an isometry of $H^2(X,\Z)$ with the reference lattice $U^3\oplus\langle-6\rangle$. A \em marked pair \em consists of a manifold and a marking. An \em isomorphism of marked pairs \em is an isomorphism respecting the markings. The above result implies that the moduli space of marked generalised Kummer fourfolds has four connected components. In a certain sense this is the smallest possible. Indeed, the moduli space of marked 2-tori also has four connected components (cf.\ \cite[Section 2]{MTW15}).

Within the set of birational maps, automorphism are exactly those which map the ample cone of a manifold to itself. Thus, the understanding of the ample cone is crucial for the study of automorphisms of any manifold. The ample cone for Kummer type fourfolds arising from moduli spaces of sheaves on abelian surfaces has been studied by Yoshioka \cite{Yos12}. His results can be generalised to any manifold of Kummer type, using either \cite{BHT13} or \cite{Mon13b}. In particular, there are three kinds of faces of the ample cone, each of them orthogonal to one of the following:
\begin{itemize}
\item Divisors $\delta$ of square $-6$ and divisibility $6$.
\item Divisors $\delta$ of square $-6$ and divisibility $3$.
\item Divisors $\delta$ of square $-6$ and divisibility $2$.
\end{itemize}
In the following, divisors of these kinds are called wall divisors. 
The first kind of divisors is not effective, however $2\delta$ (or $-2\delta$) is a sum of reduced and irreducible uniruled divisors which can be contracted to a symplectic surface.
The second kind of divisors is effective (or its opposite is effective) and again is a sum of reduced and irreducible uniruled divisors which can be contracted to a symplectic surface.
The last kind of divisors are not effective and none of their multiples is. However they correspond to $\mathbb{P}^2$'s inside our fourfold which induce Mukai flops. (In particular, $\delta/2$ is a class in $H_2(X,\mathbb{Z})$ which is the class of a line in such a $\mathbb{P}^2$.) 

\subsection{Moduli spaces}
Besides generalised Kummer manifolds there is up to now only one other construction of manifolds of Kummer type: Let $A$ be an abelian surface and $v=(r,l,s)\in H^0(A)\oplus \NS(A)\oplus H^4(A)$ a so-called \em Mukai vector \em satisfying $r\geq 0,$ $l$ effective for $r=0$ and $v^2:=l^2-2rs=6.$ Assume that there exists a $\varphi$-invariant $v$-generic stability condition. Then the Albanese fibre of the associated moduli space of stable objects $M(v)$ is a fourfold of Kummer type.

In \cite{MW14} the authors gave a lattice theoretic criterion to identify moduli spaces of stable objects.

\begin{prop}[{\cite[Prop.\ 2.4]{MW14}}]\label{prop:mod_spac}
Let $X$ be a fourfold of Kummer type. If the algebraic part of the Hodge structure on $\Lambda$ (which is induced by the embedding $H^2(X,\Z)\hookrightarrow \Lambda$) contains a copy of $U$ as a direct summand, then $X$ is the Albanese fibre of a moduli space of stable objects.
\end{prop}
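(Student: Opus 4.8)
The plan is to reverse the moduli space construction: from the Hodge-theoretic datum on $\Lambda$ we read off a complex $2$-torus $A$ and a Mukai vector $v$, and then identify $X$ with the Albanese fibre of $M(v)$ by a Torelli argument. Recall that $\Lambda\cong U^4$, that it carries the weight two Hodge structure determined by $\Lambda^{2,0}:=H^{2,0}(X)$, and that the embedding $H^2(X,\Z)\hookrightarrow\Lambda$ presents $H^2(X,\Z)$ as $v^{\perp}$ for a primitive class $v$ with $v^2=6$ (such a $v$ exists and is unique up to $O(\Lambda)$, since $v^{\perp}\cong U^3\oplus\lat{-6}$ in $U^4$). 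As $v\perp\Lambda^{2,0}$, the class $v$ --- and the whole algebraic part $\Lambda_{alg}$ --- is of type $(1,1)$. First I would exploit the hypothesis: $\Lambda_{alg}$, hence $\Lambda$, contains a copy of $U$, and since a copy of $U$ in any lattice is automatically primitive and, being unimodular, an orthogonal direct summand, we may write $\Lambda=U\oplus\Lambda'$ with $\Lambda'$ even unimodular of signature $(3,3)$, that is $\Lambda'\cong U^3$. Because the chosen $U$ is algebraic we have $\Lambda^{2,0}\subset\Lambda'\otimes\C$, so the Hodge structure restricts to a weight two Hodge structure with $h^{2,0}=1$ on $\Lambda'\cong U^3$ --- exactly the shape of $H^2$ of a complex $2$-torus with its cup-product form.

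I would then invoke the surjectivity of the period map and the Torelli theorem for complex $2$-tori to obtain a torus $A$ together with a Hodge isometry $H^2(A,\Z)\cong\Lambda'$; combined with the identification of $U$ with $H^0(A,\Z)\oplus H^4(A,\Z)$ this extends to a Hodge isometry $\widetilde H(A,\Z)=H^0(A,\Z)\oplus H^2(A,\Z)\oplus H^4(A,\Z)\cong\Lambda$. Under it $v$ becomes a primitive Mukai vector $(r,l,s)$ with $l\in\NS(A)$ and $v^2=l^2-2rs=6$; after replacing $v$ by $-v$ we may assume $r\geq 0$, with $l$ effective when $r=0$. Choosing a $v$-generic stability condition --- Gieseker stability for a generic polarisation when $A$ is projective, and the tautological one on $M(1,0,-3)=A^{[3]}$ in the non-projective case --- yields a moduli space $M(v)$ whose Albanese fibre $Y$ is a fourfold of Kummer type, and by Yoshioka's description of the second cohomology of such moduli spaces $H^2(Y,\Z)$ is Hodge isometric to $v^{\perp}\subset\widetilde H(A,\Z)$, hence to $H^2(X,\Z)$ with its Hodge structure.

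It remains to promote this Hodge isometry $g\colon H^2(X,\Z)\to H^2(Y,\Z)$ to an isomorphism $X\cong Y$, and this is where I expect the real difficulty to lie. Using \Ref{prop}{mono_kum}, which identifies $Mon^2(Y)$ with the subgroup $\mathcal N(Y)$ of orientation preserving isometries on which $\det\circ\chi$ is trivial, I would correct $g$ by elements of $Mon^2(X)$ and of $Mon^2(Y)$ --- the latter realised by isometries of $\widetilde H(A,\Z)$ fixing $v$ --- so as to turn $g$ into a parallel transport operator while keeping it a Hodge isometry; then \Ref{thm}{torelli} gives that $X$ and $Y$ are birational, and since every hyperk\"ahler birational model of the Albanese fibre of a moduli space on an abelian surface is again such a fibre (for a possibly different Mukai vector and stability condition), $X$ is itself the Albanese fibre of a moduli space of stable objects. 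The hard part will be this correction step: it requires matching the connected component of the moduli space of marked Kummer fourfolds, i.e.\ simultaneously controlling the orientation and the induced action on the discriminant group $A_X\cong\Z/6\Z$, which is precisely what the explicit form of $Mon^2$ in \Ref{prop}{mono_kum} provides.
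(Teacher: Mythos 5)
Note first that the paper does not prove this statement at all: it is quoted verbatim from \cite[Prop.~2.4]{MW14}, so there is no internal proof to compare against. Your outline does follow the strategy of that reference (read off a torus $A$ and a Mukai vector $v$ from the Hodge structure on $\Lambda\cong U^4$, build the Albanese fibre $Y$ of $M(v)$, and transfer the resulting Hodge isometry $H^2(Y,\Z)\to H^2(X,\Z)$ to a birational map via Torelli), and the first two thirds of your sketch are sound: the unimodular splitting $\Lambda=U\oplus\Lambda'$, the surjectivity of the period map for $2$-tori, and the identification $H^2(Y,\Z)\cong v^\perp$ are all fine.

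The gap is in the step you yourself flag as the hard one, and your proposed mechanism for it cannot work as stated. You propose to ``correct $g$ by elements of $Mon^2(X)$ and of $Mon^2(Y)$'' until it becomes a parallel transport operator. But $Mon^2$ is a normal subgroup of index four in $O(U^3\oplus\lat{-6})$, cut out by the two characters appearing in \Ref{prop}{mono_kum} (orientation, and $\det\cdot\chi$), and pre- or post-composing $g$ with monodromy operators leaves the class of $g$ in the quotient $(\Z/2\Z)^2$ unchanged. If the Hodge isometry you constructed lands in the wrong coset --- and nothing in the construction prevents this --- no amount of such correction will make it a parallel transport operator. What actually closes the argument is the availability of Hodge isometries that genuinely change the coset while staying inside the class of moduli spaces: $-\id_{H^2}$ (a Hodge isometry of any $X$, orientation-reversing with $\det\cdot\chi=+1$) and the ``dual'' isometry obtained by replacing $(A,v)$ by $(\hat A,\hat v)$, whose Albanese fibre is again a moduli space; these two operations realise all four cosets. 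This is precisely the point the paper alludes to in Section~3 when it says the fourth family of marked pairs is ``obtained by composing with the `dual' isometry''. Your sketch needs this input to be a proof. Two smaller soft spots: arbitrary isometries of $\widetilde H(A,\Z)$ fixing $v$ need not be Hodge isometries, so they do not all give admissible corrections; and the non-projective case (where $\Lambda_{\mathrm{alg}}$ has signature $(1,*)$ and $v$ must first be moved into the orbit of $(1,0,-3)$ before any moduli space can be formed) is waved at rather than handled.
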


\subsection{Lattices}\label{subsec:lat}
Fix a Kummer fourfold $X$ and let $G$ be a finite group of automorphisms. Every element of $G$ acts on the holomorphic two-form of $X$ by homotheties. We call an automorphism \em symplectic \em if it preserves the symplectic form and \em non-symplectic \em otherwise. We denote by $T_G(X):=H^2(X,\mathbb{Z})^G$ the \em invariant lattice \em and by $S_G(X):=T_G(X)^\perp$ the \em coinvariant lattice.\em

The fact that $H^2(X,\Z)$ is not unimodular often makes things more complicated. It is therefore sometimes convenient to switch to a bigger unimodular lattice: We consider a primitive embedding $H^2(X,\mathbb{Z})\hookrightarrow \Lambda:=U^4$ -- such an embedding is unique up to an isometry of $\Lambda$ -- that sends $U^3\subset H^2(X)$ identically into the first three copies of $U$ and sends $\delta$, a generator of $(U^3)^\perp$, to $e-3f$, where $e,f$ form a typical basis of the last copy of $U$. The action of $G$ on $A_X$ is either trivial or $-1$, therefore there is a well defined extension of the action of $G$ on $\Lambda$: if $g\in G$ acts trivially on $A_X$, then $g(e+3f)=e+3f$, otherwise $g(e+3f)=-e-3f$. We keep calling $T_G(\Lambda):=\Lambda^G$ and $S_G(\Lambda):=T_G(\Lambda)^\perp$. The advantage in this setting is that if $G$ is of prime order $p$, then $T_G(\Lambda)$ and $S_G(\Lambda)$ are $p$-elementary lattices. This is by no means true for $S_G(X)$ and $T_G(X)$. To prove this last statement we use the following lemma:

\begin{lem}\label{lem:G_tors}
Let $R$ be a lattice, and let $G\subset O(R)$. Then the following hold:
\begin{itemize}
\item $T_G(R)$ contains $\sum_{g\in G}gv$ for all $v\in R$.
\item $S_G(R)$ contains $v-gv$ for all $v\in R$ and all $g\in G$.
\item $R/(T_G(R)\oplus S_G(R))$ is of $|G|$-torsion.
\end{itemize}
\end{lem}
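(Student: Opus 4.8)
The plan is to verify each of the three bullet points in order, since the first two are essentially immediate and feed directly into the third. First I would prove the first bullet: for any $v\in R$, the element $w:=\sum_{g\in G}gv$ is fixed by every $h\in G$, because $hw=\sum_{g\in G}(hg)v=\sum_{g'\in G}g'v=w$ by reindexing the sum over the group (left multiplication by $h$ being a bijection of $G$). Hence $w\in T_G(R)$. Second, for the coinvariant bullet, I would check that $v-gv$ is orthogonal to every fixed vector: if $t\in T_G(R)$, then $(v-gv,t)=(v,t)-(gv,t)=(v,t)-(v,g^{-1}t)=(v,t)-(v,t)=0$, using that $g$ acts by isometries and $g^{-1}t=t$. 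So $v-gv\in T_G(R)^\perp=S_G(R)$.

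The third bullet is the substantive point. I would take an arbitrary $v\in R$ and exhibit $|G|\cdot v$ inside $T_G(R)\oplus S_G(R)$. Write
\[
|G|\,v=\sum_{g\in G}gv+\sum_{g\in G}(v-gv).
\]
The first sum lies in $T_G(R)$ by the first bullet, and the second sum lies in $S_G(R)$ because it is a finite sum of elements of the form $v-gv$, each of which lies in $S_G(R)$ by the second bullet (and $S_G(R)$ is a subgroup). Therefore $|G|\,v\in T_G(R)+T_G(R)^\perp\subseteq T_G(R)\oplus S_G(R)$ — the sum is direct since $T_G(R)\cap S_G(R)$ is contained in the kernel of the pairing on $R$, which is $0$ for a lattice. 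Hence every element of $R/(T_G(R)\oplus S_G(R))$ is killed by $|G|$, which is exactly the claim.

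I do not expect any real obstacle here; the only mild subtlety is making sure the decomposition $|G|v = \sum gv + \sum(v-gv)$ is genuinely an identity in $R$ (it is, term by term) and that one is allowed to conclude the sum $T_G(R)\oplus S_G(R)$ is internal direct — this uses non-degeneracy of the bilinear form on the lattice $R$, so that a vector orthogonal to all of $T_G(R)$ and simultaneously lying in $T_G(R)$ must be isotropic against everything and hence zero. If one wanted to avoid even invoking non-degeneracy, one could instead simply state the result for the quotient $R/(T_G(R)+S_G(R))$, but since $R$ is a lattice the direct sum formulation is harmless. The argument is uniform in $G$ and $R$ and makes no use of the specific geometry of Kummer fourfolds, which is why it is stated as a standalone lemma.
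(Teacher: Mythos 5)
Your proof is correct and follows essentially the same route as the paper: the same reindexing argument for invariance, the same isometry computation showing $v-gv\perp T_G(R)$, and the identical decomposition $|G|v=\sum_{g\in G}gv+\sum_{g\in G}(v-gv)$ for the torsion claim. The only addition is your remark on why the sum is internal direct, which the paper leaves implicit.
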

\begin{proof}
It is obvious that $\sum_{g\in G}gv$ is $G$-invariant for all $v\in R$. For $w\in T_G(R)$ we have $(w,v)=(gw,gv)=(w,gv)$ for all $v\in R$ and all $g\in G$. Therefore $v-gv$ is orthogonal to all $G$-invariant vectors, hence it lies in $S_G(R)$.
Let $t\in R$, we can write $|G|t=\sum_{g\in G} g(t) + \sum_{g\in G}(t-g(t))$, where the first term lies in $T_G(R)$ and the second in $S_G(R)$.
\end{proof}

Since $T_G(\Lambda)$ and $S_G(\Lambda)$ are $p$-elementary lattices, we will use the classification results of Nikulin to find them. The first theorem deals with the case $p=2$:

\begin{thm}[\cite{Nik83}]\label{thm:2el}
An even hyperbolic $2$-elementary lattice of rank $r$ is uniquely determined by the invariants $(r,a,\delta)$ and exists if and only if the following conditions are satisfied:
$$\left\{
\begin{array}{l l}
a\leq r\\
r\equiv a\mod{2}\\
\text{if }\delta=0\text{, then }r\equiv 2\mod{4}\\
\text{if }a=0\text{, then }\delta=0\\
\text{if }a\leq 1\text{, then }r\equiv 2\pm a\mod{8}\\
\text{if }a=2\text{ and }r\equiv 6\mod{8}\text{, then }\delta=0\\
\text{if }\delta=0\text{ and }a=r\text{, then }r\equiv 2\mod{8}
\end{array}
\right.$$
\end{thm}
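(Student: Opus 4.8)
The plan is to reduce the statement to Nikulin's classification of finite quadratic forms together with his existence/uniqueness theorems for even lattices. Given an even $2$-elementary lattice $L$, its discriminant group is $A_L\cong(\ZZ/2\ZZ)^a$ with a discriminant quadratic form $q_L$ valued in $\Q/2\ZZ$, and $\delta=0$ means $q_L$ takes values only in $\ZZ/2\ZZ$, $\delta=1$ otherwise. The first step is to classify such forms: every even $2$-elementary finite quadratic form is an orthogonal sum of the rank-two forms $u_1,v_1$ on $(\ZZ/2\ZZ)^2$ and the rank-one forms $w_{2,\pm1}=\lat{\pm 1/2}$ on $\ZZ/2\ZZ$, with $\delta=0$ exactly when no $w_{2,\pm1}$ occurs; modulo the relations between these generators (for instance $v_1\oplus v_1\cong u_1\oplus u_1$) one finds that such a form on $(\ZZ/2\ZZ)^a$ is, when it exists, determined by $a$, $\delta$ and the signature invariant $\mathrm{sign}(q_L)\in\ZZ/8\ZZ$ (to which each $u_1$ contributes $0$, each $v_1$ contributes $4$, each $w_{2,\pm1}$ contributes $\pm1$), subject to short admissibility rules: if $\delta=0$ then $a$ is even and $\mathrm{sign}(q_L)\in\{0,4\}$; if $\delta=1$ then $\mathrm{sign}(q_L)\equiv a\mod{2}$ and, moreover, for $a=1$ one has $\mathrm{sign}(q_L)\in\{1,7\}$ and for $a=2$ one has $\mathrm{sign}(q_L)\in\{0,2,6\}$.

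Second, impose the signature obstruction from Milgram's formula: any even lattice satisfies $\mathrm{sign}(L)\equiv\mathrm{sign}(q_L)\mod{8}$, so a hyperbolic lattice of rank $r$ must satisfy $2-r\equiv\mathrm{sign}(q_L)\mod{8}$. Feeding the admissibility rules of the first step into this congruence produces exactly the displayed system: $a\leq r$ and $r\equiv a\mod{2}$ encode that $A_L$ is $2$-elementary of rank $a$; $\delta=0\Rightarrow r\equiv 2\mod{4}$ reflects that a form with $\delta=0$ has signature $0$ or $4$ mod $8$; the conditions for $a=0$, for $a\leq 1$ and for ``$a=2$ with $r\equiv 6\mod{8}$'' pick out the admissible forms on groups of rank $\leq 2$; and ``$\delta=0$ and $a=r$'' $\Rightarrow r\equiv 2\mod{8}$ is the specialisation of the extra constraint that appears in Nikulin's existence criterion in the boundary case $\rk(L)=\ell(A_L)$.

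For \emph{existence}: having fixed an admissible form $q$, Nikulin's existence criterion for even lattices with prescribed signature and discriminant form \cite{Nik83} yields an even lattice of signature $(1,r-1)$ with $A_L\cong(\ZZ/2\ZZ)^a$ and invariant $\delta$ — the hypotheses hold because $r\geq a=\ell(A_q)$, the lattice is indefinite (for $r=1$ only $\lat{2},\lat{-2}$ occur, handled directly), and the boundary case $r=a$ is covered by the congruence just discussed; alternatively, one realises every such lattice as an orthogonal sum of the blocks $U,U(2),\lat{2},\lat{-2},E_7,E_8,E_8(2),D_4$ and reads off its invariants. For \emph{uniqueness}: by Nikulin's theorem on indefinite lattices \cite{Nik83}, an even indefinite lattice with $\rk(L)\geq\ell(A_L)+2$ is determined up to isometry by its signature and discriminant form, and the remaining boundary cases $r\in\{a,a+1\}$ follow from the special rigidity of $2$-elementary lattices (the genus of such an indefinite lattice consists of a single class).

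The step I expect to be the main obstacle is the bookkeeping of the first two steps: one must check that the list of even $2$-elementary finite quadratic forms up to isomorphism is exhaustive and that the relations among $u_1,v_1,w_{2,\pm1}$ are applied correctly, so that the resulting admissibility conditions reproduce the displayed system exactly — neither more nor fewer constraints. The boundary rank cases $r=a$ (for existence, requiring the finer part of Nikulin's criterion) and $r\in\{a,a+1\}$ (for uniqueness, where the general indefinite theorem does not apply verbatim) are the other delicate points.
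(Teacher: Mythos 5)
The paper does not prove this statement at all: Theorem~\ref{thm:2el} is imported verbatim from \cite{Nik83} as a black box, so there is no internal argument to compare yours against. That said, your sketch follows the standard route by which this classification is actually established in the literature (Nikulin's classification of finite quadratic forms on $2$-elementary groups into sums of $u_1$, $v_1$, $\lat{\pm 1/2}$, the signature formula $\mathrm{sign}(L)\equiv\mathrm{sign}(q_L)\mod 8$, and the general existence/uniqueness theorems for even indefinite lattices), and your bookkeeping checks out: $\delta=0$ forces $\mathrm{sign}(q_L)\in\{0,4\}$ and $a$ even, hence $r\equiv 2\mod 4$; the $a\leq 1$ and $a=2$ clauses are exactly the admissible forms of small rank fed through $2-r\equiv\mathrm{sign}(q_L)\mod 8$; and the final clause is indeed the boundary condition $\rk(L)=l(A_L)$ in the existence criterion. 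The only place your argument is genuinely thin is uniqueness in the boundary cases: asserting that ``the genus consists of a single class'' for $r\in\{a,a+1\}$ is precisely what needs proving there, not a known rigidity you can quote. For $r=a$ the clean fix is to note that $L(1/2)$ is an indefinite unimodular lattice (even when $\delta=0$, odd when $\delta=1$), which is unique in its genus; the case $r=a+1$ requires a similar reduction or a direct appeal to Nikulin's Theorem~3.6.2, which covers all indefinite even $2$-elementary lattices. With that caveat made explicit, your proposal is a faithful reconstruction of the proof the paper delegates to \cite{Nik83}.
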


And the second theorem deals with the case $p\neq 2$:

\begin{thm}[{\cite[Section 1]{RS81}}]\label{thm:pel}
An even hyperbolic $p$-elementary lattice of rank $r$ with $p\neq 2$ with invariants $(r,a)$ exists if and only if the following conditions are satisfied:
$$\left\{
\begin{array}{l l}
a\leq r\\
r\equiv 0\mod{2}\\
\text{if }a\equiv 0\mod{2}\text{, then }r\equiv 2\mod{4}\\
\text{if }a\equiv 1\mod{2}\text{, then }p\equiv (-1)^{r/2-1}\mod{4}\\
\text{if }r\not\equiv 2\mod{8}\text{, then }r>a>0
\end{array}
\right.$$
Such a lattice is uniquely determined by the invariants $(r,a)$ if $r\geq 3$.
\end{thm}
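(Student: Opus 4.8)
The statement is a special case of Nikulin's theory of discriminant forms (\cite{Nik83}), and the plan is to deduce it from the general existence and uniqueness theorems for even lattices with prescribed signature and discriminant form; the self-contained argument of Rudakov--Shafarevich (\cite{RS81}) carries out essentially the same steps.

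First I would describe the possible discriminant forms. Since $L$ is $p$-elementary with $p$ odd, $A_L\cong(\Z/p\Z)^a$ and the discriminant quadratic form $q$ is determined by the induced nondegenerate $\mathbb{F}_p$-valued bilinear form of rank $a$; over $\mathbb{F}_p$ with $p$ odd there are, for each $a\geq 1$, exactly two such forms, distinguished by the square class $\epsilon(q)\in\{\pm 1\}$ of the determinant. By the standard Gauss-sum computation the signature $\sigma(q)\bmod 8$ is determined by $a$, $\epsilon(q)$ and $p\bmod 4$; in particular $\sigma(q)$ is always even, and the two values of $\epsilon(q)$ produce the two residues of $\sigma(q)$ that can occur.

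Next I would apply Nikulin's existence criterion to the signature $(t_+,t_-)=(1,r-1)$ with the form $q$ on $(\Z/p\Z)^a$: such an even lattice exists if and only if (i) $r\geq a$ (the condition $t_++t_-\geq\ell(A_L)$, with $\ell(A_L)=a$ the length of the discriminant group), (ii) the Milgram congruence $2-r\equiv\sigma(q)\pmod 8$, and (iii) when $r=a$, an additional $p$-adic compatibility between the sign $(-1)^{t_-}$ of $\det L$ and the determinant of $q$. Matching these against the statement is bookkeeping. Condition (i) is the first item; substituting Step~1 into (ii) forces $r$ even (the second item), and when $0<a<r$ one checks that a choice of $\epsilon(q)$ satisfying (ii) exists precisely when ``$a\equiv 0\mod{2}\Rightarrow r\equiv 2\mod{4}$'' and ``$a\equiv 1\mod{2}\Rightarrow p\equiv(-1)^{r/2-1}\mod{4}$'' hold. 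Finally $a=0$ forces $r\equiv 2\mod{8}$ (even unimodular indefinite lattices), and $r=a$ forces $L\cong N(p)$ with $N$ even unimodular of signature $(1,r-1)$, hence again $r\equiv 2\mod{8}$ -- together these give ``$r\not\equiv 2\mod{8}\Rightarrow r>a>0$''. Conversely, when all the conditions hold $\epsilon(q)$ can be chosen to satisfy (ii) and (iii), so the lattice exists.

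It remains to prove uniqueness when $r\geq 3$, i.e.\ $r\geq 4$ since $r$ is even. If $a\leq r-2$, then $q$ is pinned down by $(r,a)$ through Step~1 and $r\geq\ell(A_L)+2$, so Nikulin's uniqueness theorem for indefinite lattices applies at once. The main obstacle will be the cases $a=r-1$ and $a=r$, where the rank exceeds the length of $A_L$ only by $1$ or $0$ and the general criterion no longer applies: here one must appeal to the refined uniqueness statement (using that $A_L$ is a $p$-group and that $q$ is already determined), or argue directly over $\Z_p$, the case $a=r$ again being transparent through $L\cong N(p)$. That endgame, together with turning Milgram's formula into the explicit congruences above, is where the real work lies.
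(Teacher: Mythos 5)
The paper gives no proof of this statement: it is imported verbatim from \cite{RS81} (Section 1), so there is no in-paper argument to compare yours against. Judged on its own terms, your outline is the standard modern derivation via \cite{Nik79}, and the bookkeeping you defer does work out: for odd $p$ there are exactly two nondegenerate quadratic forms on $(\Z/p\Z)^a$, their signatures mod $8$ are $\{0,4\}$ when $p\equiv 1 \mod 4$ and $\{2a,2a+4\}$ when $p\equiv 3\mod 4$, and substituting into Milgram's relation $2-r\equiv\sigma(q)\mod 8$ yields precisely ``$r$ even'' together with the two displayed congruences; the extremal cases also behave as you say, since $a=0$ gives an even unimodular hyperbolic lattice and $a=r$ forces $L=pL^*$, i.e.\ $L\cong N(p)$ with $N$ even unimodular hyperbolic, so $r\equiv 2\mod 8$ in both cases.

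The one genuine gap is the uniqueness endgame you flag but do not close. For $a=r$ uniqueness follows from $L\cong N(p)$ as you note, and for $a\leq r-2$ Theorem~\ref{thm:uniclat} applies once you observe that the Milgram congruence singles out a unique $\epsilon(q)$, so $q$ is determined by $(r,a,p)$. But the case $a=r-1$ (which does occur: $r$ even, $a$ odd, e.g.\ $r=4$, $a=3$) is not covered by that criterion, and ``argue directly over $\Z_p$'' is not an argument. What is actually needed is Nikulin's refined uniqueness theorem \cite[Thm.~1.14.2]{Nik79}: its hypothesis at the prime $p$ is satisfied because a $p$-elementary discriminant form of rank $a\geq 2$ splits off two orthogonal rank-one summands $w_{p,1}^{\pm 1}$, and at all other primes the discriminant group is trivial. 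Spelling that out would complete the proof; as written, the proposal is a correct strategy with this one step left open.
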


Since the classification theorem of $p$-elementary lattices with $p\neq 2$ above deals only with hyperbolic lattices, we will need sometimes to split a lattice to study it. This is done by the following theorem:

\begin{thm}[{\cite[Corollary~1.13.5]{Nik79}}]\label{thm:split}
Let $L$ be an even lattice of rank $r$. If $L$ is indefinite, and $r\geq 3+1(A_L)$, then $L\simeq U\oplus L_0$ for some lattice $L_0$.
\end{thm}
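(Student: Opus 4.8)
The plan is to reduce the statement to producing in $L$ a primitive isotropic vector $e$ with $\div_L(e)=1$, after which the splitting is formal. Indeed, given such an $e$, pick $f'\in L$ with $(e,f')=1$ (possible since $\div_L(e)=1$); as $L$ is even, $(f',f')=2k$ for some $k\in\Z$, and then $f:=f'-ke$ satisfies $(f,f)=0$ and $(e,f)=1$, so $M:=\langle e,f\rangle\cong U$. Being unimodular, $M$ is automatically an orthogonal direct summand of $L$: the form yields a splitting retraction $L\to M$ (the adjoint of the inclusion $M\hookrightarrow L$, using that $M$ is self-dual), whence $L\cong U\oplus L_0$ with $L_0:=M^{\perp}$, which is again a lattice. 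So the whole content of the theorem lies in the existence of $e$.

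For this I would first observe that $L$ is isotropic over every completion. Over $\R$ this is just indefiniteness. Over $\Q_p$ it follows from the rank hypothesis: in the Jordan splitting $L\otimes\Z_p=L_{(0)}\perp pL_{(1)}\perp p^2L_{(2)}\perp\cdots$ the unimodular component has rank $\rk(L)-\ell(A_{L,p})\geq\rk(L)-\ell(A_L)\geq 3$, since $\ell(A_{L,p})\leq\ell(A_L)$ and by hypothesis $\rk(L)\geq 3+\ell(A_L)$. As $L$ is even, $L_{(0)}$ is even, which for $p=2$ forces $\rk(L_{(0)})$ to be even, hence $\geq 4$. In either case $L_{(0)}$, being an even unimodular $\Z_p$-lattice of this rank, is isotropic over $\Q_p$ and indeed splits off a copy of $U\otimes\Z_p$. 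By the Hasse--Minkowski theorem $L\otimes\Q$ is then isotropic over $\Q$, so $L$ contains a primitive isotropic vector.

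The remaining task — forcing the divisibility down to $1$ — is the crux. I would invoke Eichler's theorem on indefinite lattices (a form of strong approximation for the spin group): for an indefinite lattice of rank $\geq 3$, the orbit of a primitive vector $v$ under the group of proper isometries is determined by $(v,v)$, by $\div_L(v)$, and by the class of $v/\div_L(v)$ in the discriminant group, and any such datum realised in $L\otimes\Z_p$ for every $p$ is realised in $L$. Since each $L\otimes\Z_p$ contains $U\otimes\Z_p$ by the previous step, it contains a primitive isotropic vector of divisibility $1$; as $L$ is indefinite of rank $\geq 3$, we conclude that $L$ contains a primitive isotropic $e$ with $\div_L(e)=1$, as required. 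Equivalently, one may quote Nikulin's existence theorem for primitive embeddings into indefinite even lattices directly: for $S=U$ the numerical condition reduces to $\ell(A_L)\leq\rk(L)-3$, precisely our hypothesis, and a primitive embedding $U\hookrightarrow L$ splits off $U$ exactly as in the first paragraph.

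I expect the genuine obstacle to be this last globalisation step. The purely rational input (Hasse--Minkowski) yields only some isotropic vector, over whose divisibility one has no control; it is the inequality $\rk(L)\geq 3+\ell(A_L)$ that makes each completion split off a full hyperbolic plane, and it is a strong-approximation statement (Eichler's theorem, or equivalently the surjectivity results in Nikulin's theory of discriminant forms) that transports this uniform local information to a single global primitive isotropic vector of divisibility $1$.
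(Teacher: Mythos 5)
The paper offers no proof of this statement: it is imported wholesale from Nikulin, and the hypothesis should be read as $r\geq 3+l(A_L)$ (the ``$1$'' is a typo for $l$, the minimal number of generators of the discriminant group). Nikulin's own derivation is genus-theoretic and short: by his existence theorem there is an even lattice $L_0$ of signature $(t_+-1,t_--1)$ with discriminant form $q_L$ (possible because $\rk(L_0)=r-2>l(A_L)$); then $U\oplus L_0$ has the same signature and discriminant form as $L$, hence lies in the same genus, and one concludes from uniqueness in the genus of indefinite lattices with $l(A_L)\leq r-2$, i.e.\ from Theorem~\ref{thm:uniclat}. Your argument instead hunts directly for a primitive isotropic vector of divisibility one. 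Your first paragraph (a unimodular sublattice is automatically an orthogonal summand) and your second paragraph (each $L\otimes\Z_p$ contains $U\otimes\Z_p$ as a summand, because the unimodular Jordan block has rank at least $3$, forced up to $4$ at $p=2$ by evenness) are correct and extract exactly the local content of the hypothesis.

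The soft spot is the globalisation, as you yourself suspect. The version of ``Eichler's theorem'' you quote --- orbits of primitive vectors in an arbitrary indefinite lattice of rank $\geq 3$ classified by norm, divisibility and discriminant class, together with a local-to-global existence statement --- is not a standard citable result in that generality. The usual Eichler criterion requires $U^{\oplus 2}\subseteq L$, which would be circular here, and the general local--global principle for primitive representations by indefinite ternary and quaternary lattices genuinely fails in the presence of spinor exceptions. What saves the argument is precisely your second paragraph: since every $L\otimes\Z_p$ contains a unimodular Jordan component of rank $\geq 3$ (an even unimodular one of rank $\geq 4$ at $p=2$), the local spinor norm groups all equal $\Z_p^*\Q_p^{*2}$, so the genus of $L$ consists of a single spinor genus and a single class, and strong approximation then removes the spinor obstruction to representing $0$ primitively with divisibility one. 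That verification is the missing step and must be supplied for the direct route to be complete. Alternatively, your closing suggestion --- invoke Nikulin's existence theorem for a lattice with invariants $(t_+-1,t_--1,q_L)$ together with the uniqueness statement of Theorem~\ref{thm:uniclat} --- closes the gap immediately and is, in substance, Nikulin's own proof of the corollary.
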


The following Lemma gives some restrictions for a lattice to have an action of prime order:

\begin{lem}\label{lem:aleqm}
Let $L$ be a lattice and $G\subset O(L)$ a subgroup generated by $\varphi$ of prime order $p$. Then
\[m:=\frac{\rk\left(S_G(L)\right)}{p-1}\]
is an integer and
\[\frac{L}{T_G(L)\oplus S_G(L)}\cong\left(\Z/p\Z\right)^a.\]
Moreover, there are natural embeddings of $\frac{L}{T_G(L)\oplus S_G(L)}$ into the discriminant groups $A_{T_G(L)}$ and $A_{S_G(L)}$, and $a\leq m$.
\end{lem}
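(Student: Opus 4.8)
The plan is to exploit the two facts already established in Lemma~\ref{lem:G_tors}: that $S_G(L)$ contains all elements $v-\varphi v$ and that $L/(T_G(L)\oplus S_G(L))$ is $|G|=p$-torsion. First I would show that $m$ is an integer. Since $\varphi$ has prime order $p$, its minimal polynomial on $S_G(L)\otimes\Q$ divides $x^p-1$; as there are no nonzero $\varphi$-fixed vectors in $S_G(L)$, the factor $(x-1)$ does not occur, so the minimal polynomial is the cyclotomic polynomial $\Phi_p(x)=1+x+\cdots+x^{p-1}$, which is irreducible of degree $p-1$ over $\Q$. Hence $S_G(L)\otimes\Q$ is a module over $\Q[x]/\Phi_p(x)=\Q(\zeta_p)$, a field of degree $p-1$ over $\Q$, so its $\Q$-dimension $\rk(S_G(L))$ is divisible by $p-1$, giving that $m$ is an integer.

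Next, the statement that $L/(T_G(L)\oplus S_G(L))\cong(\Z/p\Z)^a$: by Lemma~\ref{lem:G_tors} this quotient is killed by $p$, hence it is an $\mathbb{F}_p$-vector space, necessarily of the form $(\Z/p\Z)^a$ for some $a\ge 0$; one just has to note it is finitely generated (it is a subquotient of the finite-rank lattice $L$, and in fact a quotient of the torsion group $A_{T_G(L)}\oplus A_{S_G(L)}$, which is finite).

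For the natural embeddings into the discriminant groups, I would use the standard overlattice argument: the primitive closure of $T_G(L)$ in $L$ (resp.\ of $S_G(L)$) contains $T_G(L)$ (resp.\ $S_G(L)$) with finite index, and $L$ sits between $T_G(L)\oplus S_G(L)$ and its dual; the quotient $L/(T_G(L)\oplus S_G(L))$ maps injectively into $A_{T_G(L)}\oplus A_{S_G(L)}$, and since $T_G(L)$ and $S_G(L)$ are each primitive in $L$, the two projections of this quotient to $A_{T_G(L)}$ and to $A_{S_G(L)}$ are each injective. This is the isotropic-subgroup description of overlattices from Nikulin; I would cite \cite{Nik79} rather than reprove it.

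Finally, for $a\le m$: since $A_{S_G(L)}$ contains $(\Z/p\Z)^a$ as a subgroup, it suffices to bound the $p$-rank (minimal number of generators) of $A_{S_G(L)}$ by $m$. Here I would argue that $S_G(L)$ is $p$-elementary in a suitable sense, or more directly: the sublattice $N$ of $S_G(L)$ generated by all $v-\varphi v$ for $v\in L$ is $G$-stable, and $S_G(L)/N$ is $p$-torsion while $N$ itself, viewed over $\Z[\zeta_p]$, is generated (as a $\Z[\zeta_p]$-module, using the relation $\sum_{i=0}^{p-1}\varphi^i = 0$ on $S_G(L)$) by $m$ elements after tensoring with $\Q$; chasing discriminants, $A_{S_G(L)}$ is then generated by at most $m$ elements, whence $a\le m$. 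I expect this last step — controlling the number of generators of $A_{S_G(L)}$ in terms of the $\Z[\zeta_p]$-module structure, keeping careful track of the contribution of the ramified prime $p$ in $\Z[\zeta_p]$ — to be the main obstacle, and the cleanest route may be to invoke the known fact (e.g.\ as in the K3 literature, \cite{Nik79}) that for a prime-order isometry the coinvariant lattice $S_G(L)$ has $A_{S_G(L)}$ annihilated by $p$ with $p$-rank $\le \rk(S_G(L))/(p-1)$, rather than to redo the module computation from scratch.
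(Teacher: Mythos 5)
The first three assertions are handled correctly, and by a more elementary route than the paper's: you get the integrality of $m$ from the minimal polynomial $\Phi_p$ on $S_G(L)\otimes\Q$, and the quotient description and the two embeddings from \Ref{lem}{G_tors} together with Nikulin's overlattice formalism and the primitivity of $T_G(L)$ and $S_G(L)$ in $L$. The paper instead derives everything at once from the Curtis--Reiner classification of $\Z[\Z/p\Z]$-lattices, writing $L\cong\bigoplus_{i=1}^s(A_i,a_i)\oplus\bigoplus_{j=s+1}^{s+t}A_j\oplus\Z^{\oplus u}$ and reading off $a=s$ and $m=s+t$.

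The genuine gap is in your last step, $a\leq m$. You propose to bound the number of generators (equivalently the $p$-rank) of the whole discriminant group $A_{S_G(L)}$ by $m$, and your fallback is the ``known fact'' that for a fixed-point-free prime-order isometry $A_{S_G(L)}$ is $p$-elementary of rank at most $\rk(S_G(L))/(p-1)$. Both statements are false in the generality of this lemma, which makes no unimodularity assumption on $L$: take $L=S_G(L)=A_2(3)$ with the order-three rotation, so $m=1$, while $A_{S_G(L)}\cong\Z/3\Z\oplus\Z/9\Z$ is not $3$-elementary and has $3$-rank $2>m$. The ``known fact'' is a statement about coinvariant lattices \emph{inside unimodular lattices}, where it follows from this very lemma combined with Nikulin's gluing theory, so invoking it here would be either inapplicable or circular. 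What must be bounded is not $A_{S_G(L)}$ but the image $Q$ of $L$ in $A_{S_G(L)}$. An elementary repair along your lines does exist: for $x\in L$ one has $(1-\varphi)\mathrm{pr}_{S}(x)=(1-\varphi)x\in S_G(L)$, so $Q$ is contained in $\bigl((1-\varphi)^{-1}S_G(L)\bigr)/S_G(L)$, a group of order $|\!\det(1-\varphi|_{S_G(L)})|=\Phi_p(1)^m=p^m$; hence $a\leq m$. As written, however, the step does not go through.
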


\begin{proof}
By \cite[Theorem 74.3]{CR88} we have the following isomorphism of $\Z[G]$-modules:
\begin{equation*}
L\cong\bigoplus_{i=1}^s(A_i,a_i)\oplus\bigoplus_{j=s+1}^{s+t}\oplus\Z^{\oplus u}
\end{equation*}
where the $A_i$ are ideals of $\Z[\xi_p]$ with $\xi_p$ a primitive $p$-th root of the unity, $(A_i,a_i)\cong A_i\oplus\Z$ as $\Z$-modules with $a_i\in A_i\setminus(\xi_p -1)A_i$, and the action of $\varphi$ is defined by:
\begin{itemize}
\item trivial on $\Z^{\oplus u}$,
\item by multiplication by $\xi_p$ on $A_i$,
\item and by $\varphi.(x,k)=(\xi_p x+a_i k,k)$ on $(A_i,a_i)$.
\end{itemize}
Since this decomposition respects the action of $\varphi$, we can look for invariant and coinvariant lattices on each term:
\begin{itemize}
\item The action is trivial on $\Z^{\oplus u}$ so this term goes in the invariant part.
\item On $A_i$, $\varphi$ acts by $\xi_p$. But $S_G(L)=\Ker\left(\sum_{j=0}^{p-1}\varphi^j\right)$ and this morphism acts on $A_i$ by $\sum_{j=0}^{p-1}\xi_p^j=0$, hence all $A_i$ are coinvariant.
\item On $(A_i,a_i)$, we compute $\sum_{j=0}^{p-1}\varphi^j.(x,k)=(*,pk)$ so this can be zero only if $k=0$. Moreover, the action restricts to $A_i$ by multiplication by $\xi_p$, hence by the same argument as above, $A_i$ is the coinvariant part of $(A_i,a_i)$. Then we look for invariant elements of $(A_i,a_i)$, that is elements such that:
\begin{align*}
&\varphi.(x,k)=(\xi_p x+a_i k,k)=(x,k)\\
\Longleftrightarrow&(\xi_p -1)x=-ka_i \numberthis \label{eqcont}
\end{align*}
so $\xi_p -1$ divides $ka_i$. But $p=(\xi_p -1)\sum_{j=1}^{p-1}j\xi_p^j$, hence $y_i:=\left(-\frac{p}{\xi_p -1}a_i,p\right)$ is fixed by the action and $\Z y_i$ is in the invariant part $T_G(A_i,a_i)$ of $(A_i,a_i)$. Then we get:
\begin{equation*}
\frac{(A_i,a_i)}{S_G(A_i,a_i)\oplus T_G(A_i,a_i)}\cong\left.\left(\frac{(A_i,a_i)}{A_i\oplus\Z y_i}\right)\right/H\cong\left.\left(\Z/p\Z\right)\right/H
\end{equation*}
where $H$ is isomorphic to $\{0\}$ or $\Z/p\Z$. But if it was $\Z/p\Z$, then we would have $(A_i,a_i)=S_G(A_i,a_i)\oplus T_G(A_i,a_i)$ and the element $(0,1)$ of $(A_i,a_i)$ would decompose into $(-x,0)+(x,1)$, with $(-x,0)\in S_G(A_i,a_i)$ and $(x,1)\in T_G(A_i,a_i)$. By the Equation~\ref{eqcont}, this would mean that $(\xi_p -1)x=-a_i$ and this is impossible because $a_i\notin(\xi_p -1)A_i$. So $H=\{0\}$, $\frac{(A_i,a_i)}{S_G(A_i,a_i)\oplus T_G(A_i,a_i)}\cong\Z/p\Z$ and $T_G(A_i,a_i)=\Z y_i$.
\end{itemize}
Coming back to $L$, what we did before implies that $T_G(L)=\bigoplus_{i=1}^s\Z y_i\oplus\Z^{\oplus u}$ and $S_G(L)=\bigoplus_{i=1}^{s+t}A_i$, hence:
\begin{equation*}
\frac{L}{S_G(L)\oplus T_G(L)}\cong\bigoplus_{i=1}^s\frac{(A_i,a_i)}{A_i\oplus\Z y_i}\cong\left(\Z/p\Z\right)^{\oplus s}.
\end{equation*}
We put $a:=s$ and, since $T_G(L)$ and $S_G(L)$ are primitive in $L$ and $S_G(L)=T_G(L)^\perp$, by \cite[\S 1, 5]{Nik79} we get a natural embedding $\frac{L}{S_G(L)\oplus T_G(L)}\subset A_{T_G(L)}\oplus A_{S_G(L)}$ and the projections on $A_{S_G(L)}$ and $A_{T_G(L)}$ give the embeddings that we wanted.\\
Finally, for any $i$ we have $\rk(A_i)=\dim_\Q(A_i\otimes\Q)=\dim_\Q(\Q[\xi_p])=p-1$, hence $m:=\frac{\rk\left(S_G(L)\right)}{p-1}=s+t$ is an integer and $a=s\leq s+t=m$.
\end{proof}

The following theorems will also be useful along the classification:

\begin{thm}[{\cite[Theorem~1.13.3]{Nik79}}]\label{thm:uniclat}
Let $L$ be an even lattice with signature $(r_+,r_-)$ and discriminant form $q_L$. If $L$ is indefinite and $l(A_L)\leq\rk(L)-2$, then $L$ is the only lattice up to isometry with the invariants $(r_+,r_-,q_L)$.
\end{thm}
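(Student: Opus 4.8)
The plan is to reduce the statement to a computation of the class number of the genus of $L$, and then to invoke strong approximation for indefinite quadratic forms.

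First I would note that two even lattices with the same signature $(r_+,r_-)$ and the same discriminant form $q_L$ necessarily lie in the same genus: over $\R$ the lattice is determined by its signature, and over each $\Z_p$ the isometry class is determined by the $p$-adic Jordan symbol, which in turn is read off from the $p$-primary component of $q_L$ together with the signature. Hence the theorem is equivalent to the assertion that the genus of $L$ contains a single isometry class. If $\rk(L)=2$, then $l(A_L)=0$, so $A_L=0$, $L$ is even unimodular and indefinite of rank $2$, whence $L\cong U$ and we are done. So assume $\rk(L)\ge 3$ from now on.

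Since $L$ is indefinite of rank at least $3$, Eichler's theorem identifies the class number of the genus with the number of (proper) spinor genera in it. The latter is the index of a certain open subgroup of the id\`ele class group of $\Q$, assembled from the images of the spinor norm maps $\theta\colon O(L\otimes\Z_p)\to\Q_p^\times/(\Q_p^\times)^2$ and from the archimedean place; it is always a power of $2$, and it suffices to show it equals $1$. This is where the hypothesis $l(A_L)\le\rk(L)-2$ is used. For each prime $p$, the $p$-primary part of $A_L$ is generated by at most $l(A_L)$ elements, so in a Jordan decomposition of $L\otimes\Z_p$ the non-unimodular blocks have total rank at most $l(A_L)\le\rk(L)-2$; therefore $L\otimes\Z_p$ splits off a unimodular orthogonal summand of rank $\ge 2$. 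By Eichler's computation of spinor norms, the presence of such a summand forces $\theta(O(L\otimes\Z_p))\supseteq\Z_p^\times$ for every $p$. Since $L$ is indefinite, the archimedean spinor norm image is all of $\R^\times$, and since $\Z$ is a principal ideal domain one has $\mathbb{A}_\Q^\times=\Q^\times\cdot\R^\times\cdot\prod_p\Z_p^\times$. Hence the index above is $1$, so the genus consists of a single spinor genus and therefore of a single class, which is the assertion.

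The main obstacle is the dyadic prime $p=2$: even $\Z_2$-lattices are built from rank-one pieces and the two rank-two even unimodular lattices over $\Z_2$, so one has to verify that the bound on the number of generators of the $2$-primary part of $A_L$ really produces an honest rank-$2$ unimodular orthogonal summand over $\Z_2$ and that its spinor norm group fills up $\Z_2^\times$; at the odd primes this is comparatively routine. A secondary technical point is to be careful, in the reduction of the first step, about recovering the complete local genus symbol at $2$ from $q_L$ and the signature.
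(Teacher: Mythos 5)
The paper does not actually prove this statement---it is quoted verbatim as Theorem~1.13.3 of Nikulin \cite{Nik79}---and your sketch reproduces the standard argument from that source: signature plus discriminant form determine the genus (Nikulin, \S 1.9), the hypothesis $l(A_L)\le\rk(L)-2$ forces a unimodular Jordan block of rank $\ge 2$ in $L\otimes\Z_p$ for every $p$, whence the local spinor norm groups contain $\Z_p^\times$ and the Eichler--Kneser theory gives a single spinor genus, hence a single class in the indefinite genus. The outline is correct, including the separate treatment of rank $2$ and the correct identification of the dyadic prime as the only delicate point, so there is nothing to add beyond the remark that the $p=2$ details (recovering the local genus symbol from $q_L$ and computing spinor norms of the even unimodular blocks $U$ and $V$ over $\Z_2$) are exactly what Nikulin's \S\S 1.8--1.9 and the cited results of Kneser supply.
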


\begin{prop}[{\cite[Section 4]{BCMS14}}]\label{prop:BCMS14p}
Let $L$ be a lattice with a non-trivial action of order $p$, with rank $p-1$ and discriminant $d_L$, then $\frac{d_L}{p^{p-2}}$ is a square in $\Q$.
\end{prop}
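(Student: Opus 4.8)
\emph{Proof plan.} The argument has three parts: identify the $\Z[G]$-module underlying $L$, pin down the shape of a $G$-invariant form on it, and then compute the discriminant explicitly. Throughout, $p$ is prime and $\varphi$ denotes a generator of the order-$p$ group.

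\textbf{Step 1 (module structure).} Since $\varphi^p=\id$, its minimal polynomial divides $t^p-1=(t-1)\Phi_p(t)$, which is squarefree over $\Q$; hence the characteristic polynomial of $\varphi$ on $L\otimes\Q$ is a product of the irreducible factors $t-1$ and $\Phi_p(t)$, and being of degree $\rk(L)=p-1$ it equals either $(t-1)^{p-1}$ or $\Phi_p(t)$. As $\varphi$ is diagonalisable and $\varphi\neq\id$, the first case is impossible, so the characteristic polynomial is $\Phi_p(t)$. Therefore $L\otimes\Q$ is a one-dimensional vector space over $K:=\Q(\xi_p)$ on which $\varphi$ acts as multiplication by $\xi_p$, and $L$ is a full lattice in it stable under $\mathcal O_K=\Z[\xi_p]$, i.e.\ a (fractional) ideal $\mathfrak a$ of $K$. (Alternatively this follows from Lemma~\ref{lem:aleqm}: here $m=1$ forces $u=0$ and $s=0$, so $L=A_1$.)

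\textbf{Step 2 (shape of the form).} Let $\sigma\in\operatorname{Gal}(K/\Q)$ be complex conjugation $\xi_p\mapsto\xi_p^{-1}$, with fixed field the maximal totally real subfield $K^+=\Q(\xi_p+\xi_p^{-1})$. Extending the bilinear form of $L$ to $K$ and using that the trace form identifies $K$ with its own $\Q$-dual, write $b(x,y)=\operatorname{Tr}_{K/\Q}\!\bigl(x\,\psi(y)\bigr)$ for a unique $\Q$-linear $\psi\colon K\to K$. The $\varphi$-invariance $b(\xi_p x,\xi_p y)=b(x,y)$ forces $\psi(\alpha y)=\sigma(\alpha)\psi(y)$ for all $\alpha\in K$, hence $\psi(y)=\gamma\,\sigma(y)$ with $\gamma:=\psi(1)\in K^\times$; symmetry of $b$ then forces $\gamma=\sigma(\gamma)$, i.e.\ $\gamma\in(K^+)^\times$. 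Thus
\[
b(x,y)=\operatorname{Tr}_{K/\Q}\bigl(\gamma\, x\,\sigma(y)\bigr),\qquad \gamma\in(K^+)^\times .
\]

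\textbf{Step 3 (the discriminant).} Fix a $\Z$-basis $v_1,\dots,v_{p-1}$ of $\mathfrak a$ and let $\tau_1,\dots,\tau_{p-1}\colon K\hookrightarrow\C$ be the embeddings; since $K$ is a CM field, $\tau_k\circ\sigma=\overline{\tau_k}$. Expanding $\operatorname{Tr}_{K/\Q}=\sum_k\tau_k$, the Gram matrix factors as $M=V^{t}\,D\,V^{*}$ with $V=(\tau_k(v_i))$, $V^{*}=(\overline{\tau_k(v_j)})$ and $D=\operatorname{diag}(\tau_k(\gamma))$, so that
\[
d_L=\det M=\lvert\det V\rvert^{2}\cdot N_{K/\Q}(\gamma).
\]
Now $\gamma\in K^+$ gives $N_{K/\Q}(\gamma)=N_{K^+/\Q}\bigl(N_{K/K^+}(\gamma)\bigr)=N_{K^+/\Q}(\gamma)^2$, a rational square; and $(\det V)^2=\operatorname{disc}(\mathfrak a)=N(\mathfrak a)^2\,d_K$ with $d_K=(-1)^{(p-1)/2}p^{p-2}$ the discriminant of $\Q(\xi_p)$, whence $\lvert\det V\rvert^{2}=N(\mathfrak a)^2\,p^{p-2}$ regardless of the parity of $(p-1)/2$ (using $\lvert\det V\rvert^2>0$). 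Combining,
\[
\frac{d_L}{p^{p-2}}=\bigl(N(\mathfrak a)\,N_{K^+/\Q}(\gamma)\bigr)^2\in(\Q^\times)^2 ,
\]
which is the claim. The only genuinely non-formal point is Step 3: one must organise the embeddings into conjugate pairs correctly, keep track of the sign relating $\lvert\det V\rvert^2$ to $(\det V)^2=\operatorname{disc}(\mathfrak a)$, and insert the exact value of the cyclotomic discriminant $d_K$. Steps 1 and 2 are standard representation theory and Galois theory.
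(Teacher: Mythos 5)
Your proof is correct. The paper itself gives no argument for this proposition --- it is quoted from \cite[Section 4]{BCMS14} --- and your three steps (identifying $L$ as a fractional ideal of $\Q(\xi_p)$, writing the form as $\operatorname{Tr}_{K/\Q}(\gamma x\sigma(y))$ with $\gamma$ in the totally real subfield, and computing $d_L=N(\mathfrak a)^2\,p^{p-2}\,N_{K^+/\Q}(\gamma)^2$ via the embedding matrix and the cyclotomic discriminant) reconstruct precisely the ideal-lattice argument of that reference, so there is nothing to add.
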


\subsection{Automorphisms}

\subsubsection{Induced automorphisms}
We recall briefly the notion of \em induced automorphisms \em on Kummer fourfolds. For a more detailed and systematic introduction we refer to \cite{MW14}.

Let $A$ be an abelian surface, $\varphi$ a group automorphism of $A$ and $v=(r,l,s)\in H^0(A)\oplus \NS(A)\oplus H^4(A)$ a so-called \em Mukai vector \em satisfying $r\geq 0,$ $l$ effective for $r=0$ and $v^2:=l^2-2rs=6$ as before. Suppose $\varphi^*l=l$ and assume that there exists a $\varphi$-invariant $v$-generic stability condition. Then the Albanese fibre of the associated moduli space of stable objects $M(v)$ is a fourfold of Kummer type by \Ref{prop}{mod_spac} and the pullback $\varphi^*$ induces an automorphism on $M(v)$, called the \em induced automorphism \em of $\varphi$ and $v$.

The main result about induced automorphisms on fourfolds of Kummer type in \cite{MW14} is the following lattice-theoretic characterisation:

\begin{thm}[{\cite[Thm.\ 3.5]{MW14}}]
Let $X$ be a fourfold of Kummer type and $\sigma$ an automorphism of $X$. If the induced action of $\sigma$ on $A_X$ is trivial and the isometry of $\Lambda$ (which is induced via the $\sigma$-equivariant embedding $H^2(X,\Z)\hookrightarrow \Lambda$) fixes a copy of $U$, then $X$ is the Albanese fibre of a moduli space of stable objects and $\sigma$ is an induced automorphism as introduced above.
\end{thm}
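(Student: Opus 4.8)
The plan is to establish the two assertions in turn. First I would show that $X$ is the Albanese fibre of a moduli space of stable objects, by exhibiting a copy of $U$ inside the algebraic part of the Hodge structure on $\Lambda$ and invoking \Ref{prop}{mod_spac}. Then, transporting the data to the Mukai lattice of the abelian surface so obtained, I would realise the given isometry by a derived autoequivalence and identify it, up to harmless modifications, with the one induced by an automorphism of a torus.

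Since $\sigma$ acts trivially on $A_X$, its action extends to the canonical isometry $\widetilde{\sigma}$ of $\Lambda=U^4$ fixing $e+3f$ described in Subsection~\ref{subsec:lat}; equipping $\Lambda$ with the weight-two Hodge structure induced by $H^2(X,\Z)\hookrightarrow\Lambda$, this $\widetilde{\sigma}$ is a Hodge isometry, and by hypothesis it fixes a copy $U_0$ of $U$. If $\sigma$ is non-symplectic, then $\widetilde{\sigma}$ acts on $H^{2,0}(\Lambda)=\C\omega_X$ (with $\omega_X$ the holomorphic two-form) by a scalar $\zeta\neq1$, so from $(v,\omega_X)=(\widetilde{\sigma}v,\widetilde{\sigma}\omega_X)=\zeta(v,\omega_X)$ we obtain $(v,\omega_X)=0$ for every $v\in\Lambda^{\widetilde{\sigma}}$; hence $\Lambda^{\widetilde{\sigma}}$ is orthogonal to $H^{2,0}\oplus H^{0,2}$, so it --- and in particular $U_0$ --- lies in the algebraic part of $\Lambda$, and \Ref{prop}{mod_spac} applies at once. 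If $\sigma$ is symplectic, I would instead use that, as for K3 surfaces, a symplectic automorphism acts trivially on the transcendental lattice $T$ of $X$, so that $T\subseteq\Lambda^{\widetilde{\sigma}}$ and the algebraic part $T^{\perp}_{\Lambda}$ of $\Lambda$ contains $S_{\widetilde{\sigma}}(\Lambda)\oplus T^{\perp}_{\Lambda^{\widetilde{\sigma}}}$; as $\Lambda^{\widetilde{\sigma}}$ has signature $(4,\ast)$ and contains a copy of $U$, a splitting argument via \Ref{thm}{split} applied to the hyperbolic lattice $T^{\perp}_{\Lambda}$ (whose discriminant form is that of $T$), together with the restrictions on $S_{\widetilde{\sigma}}(\Lambda)$ recorded in \Ref{lem}{aleqm}, again yields a copy of $U$ in the algebraic part. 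Either way $X$ is the Albanese fibre of a moduli space $M(v)$ of stable objects on an abelian surface $A$, for a Mukai vector $v=(r,l,s)$ with $v^2=6$.

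For the second assertion, the moduli-space description identifies $\Lambda$ with the Mukai lattice $\widetilde{H}(A,\Z)$ in such a way that $H^2(X,\Z)$ becomes $v^{\perp}$ and the distinguished class $e+3f$ --- fixed by $\widetilde{\sigma}$ precisely because $\sigma$ is trivial on $A_X$ --- becomes a positive multiple of $v$. Thus $\widetilde{\sigma}$ is a Hodge isometry of $\widetilde{H}(A,\Z)$ fixing both $v$ and a copy of $U$, and it is orientation preserving, being a parallel transport operator (cf.\ \Ref{prop}{mono_kum}). By the derived global Torelli theorem for abelian surfaces, $\widetilde{\sigma}$ is realised by an autoequivalence $\Phi$ of $D^{b}(A)$; and since abelian surfaces carry no spherical objects, the autoequivalence group is the explicit one of Orlov, so that, up to a shift, a line-bundle twist and a translation, $\Phi$ is either the autoequivalence $\varphi^{*}$ of some $\varphi\in\aut(A)$ or a non-geometric (Fourier--Mukai type) autoequivalence, and the latter cannot fix a copy of $U$ together with $v$. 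Hence $\Phi=\varphi^{*}$ up to those modifications, with $\varphi^{*}l=l$; as the modifications induce on the Albanese fibre of $M(v)$ only automorphisms lying in $\ker\nu=A[3]\rtimes\pm\id_A$, the induced automorphism attached to $\varphi$ and $v$ --- well defined, since the finiteness of $\varphi$ permits a $\varphi$-invariant $v$-generic stability condition --- agrees with $\sigma$ on $H^2(X,\Z)$ up to $\ker\nu$. By the Torelli Theorem~\ref{thm:torelli} an automorphism of $X$ is pinned down by its $H^2$-action modulo this finite kernel, and a last adjustment by a translation or by $-\id_A$ identifies $\sigma$ with the induced automorphism.

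I expect the main obstacle to be the book-keeping in the last step: realising $\widetilde{\sigma}$ by a specific autoequivalence $\Phi$, checking that the two fixed-sublattice hypotheses force $\Phi$ to be geometric up to the harmless modifications, and then matching the induced automorphism with $\sigma$ exactly while tracking the finite kernel $A[3]\rtimes\pm\id_A$ throughout. The symplectic sub-case of the first step --- promoting ``$\Lambda^{\widetilde{\sigma}}$ contains $U$'' to ``the algebraic part of $\Lambda$ contains $U$'' --- is a secondary, purely lattice-theoretic point, handled with Nikulin's results and the known bounds on coinvariant lattices of symplectic automorphisms.
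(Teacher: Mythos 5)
The paper offers no proof of this statement---it is imported verbatim from \cite[Thm.\ 3.5]{MW14}---so I can only judge your argument on its own terms; your overall architecture (algebraicity of the fixed $U$, then \Ref{prop}{mod_spac}, then Torelli for tori, then bookkeeping modulo $\ker\nu$) is indeed the right one. The non-symplectic half of your first step is correct and standard: from $(v,\omega_X)=\zeta(v,\omega_X)$ with $\zeta\neq1$ you get $\Lambda^{\widetilde{\sigma}}\subseteq\Lambda^{1,1}$, the fixed copy of $U$ is algebraic and, being unimodular, splits off as a direct summand. The symplectic sub-case, however, is a genuine gap that your sketch cannot close. You propose to apply \Ref{thm}{split} to the algebraic part $T(X)^{\perp}_{\Lambda}$, but that lattice can have rank as small as $3$ with discriminant length $3$, so Nikulin's criterion gives nothing; and in fact the conclusion fails there under a literal reading of the hypotheses. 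Take $X$ of Kummer type with $\Pic(X)\cong\lat{-2}^{\oplus2}$ carrying a symplectic involution (such $X$ exist by \Ref{prop}{e8tosympl}, since $\lat{-2}^{\oplus2}$ contains no classes of square $-6$ and hence no wall divisors). Then $T_{\widetilde{\sigma}}(\Lambda)\cong U^{\oplus2}\oplus\lat{2}^{\oplus2}$ certainly contains a copy of $U$, but the algebraic part of $\Lambda$ is a finite-index overlattice of $\lat{-2}^{\oplus2}\oplus\lat{6}$, which has no nonzero isotropic vectors (reduce $a^2+b^2=3c^2$ modulo $3$ and descend), hence no copy of $U$, and such an $X$ is not a moduli space. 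The theorem is only true---and is only ever invoked in this paper---when the fixed copy of $U$ lies in the \emph{algebraic} part of $\Lambda$; for non-symplectic $\sigma$ this is automatic by your own computation, and this is how the condition is meant in \cite{MW14}.

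Your second step also leaves the two hardest points as assertions. The claim that a non-geometric autoequivalence ``cannot fix a copy of $U$ together with $v$'' is not proved and is delicate even to formulate (shifts and degree-zero line-bundle twists act trivially on the whole Mukai lattice, so the ``harmless modifications'' must be pinned down before the dichotomy makes sense). More importantly, for a complex $2$-torus a Hodge isometry of $H^2(A,\Z)\cong U^{\oplus3}$ preserving the positive cone need not lift to $H^1(A,\Z)$---this is precisely why the moduli space of marked tori has four connected components, as recalled in \Ref{sec}{prelim}---so passing from the restriction of $\widetilde{\sigma}$ to $U_0^{\perp}$ to an actual $\varphi\in\Aut(A)$, possibly after replacing $A$ by its dual (a Fourier--Mukai partner giving an isomorphic moduli space), is where the real content of \cite{MW14} lies; it does not follow from a bare appeal to Orlov's theorem. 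The final identification of $\sigma$ with the induced automorphism modulo $\ker\nu\cong A[3]\rtimes\pm\id_A$ is fine once that is established.
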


Note that it is a straightforward calculation that every induced automorphism satisfies the conditions in the theorem above.

If we consider the special mukai vector $v=(1,0,3)$, one of the associated moduli spaces is nothing but the generalised Kummer fourfold, i.e.\ the Albanese fibre of the Hilbert scheme $A^{[3]}$. The induced automorphisms in this case are called \em natural automorphisms. \em They have been studied in detail in \cite{Kevin}.

\subsubsection{Automorphisms from lattice isometries}
To construct automorphisms directly is, in general, very difficult. Using the Hodge-theoretic Torelli theorem (\Ref{thm}{torelli}), we can show that certain lattice isometries of our reference lattice $L:=U^{\oplus3}\oplus\lat{-6}$ actually come from automorphisms of Kummer type fourfolds. Let us illustrate this by fixing an isometry of $L$ of order $p$, which generates a cyclic subgroup $G\subset Mon^2(L)$ (the group $Mon^2(L)$ is obtained from $Mon^2(X)$ using an arbitrary marking). Assume that the invariant lattice $T_G(L)\subset L$ has signature $(1,*)$. Now, let $X$ be a fourfold of Kummer type together with a marking such that the invariant lattice of the induced isometry group $G_X$ on $H^2(X,\Z)$ is contained in $\NS(X)$. The group $G_X$ therefore consists of parallel transport operators and we obtain a group of birational self-maps of $X$. If furthermore $G_X$ preserves all wall divisors in $NS(X)$, then we actually constructed automorphisms of $X$, which, in this case, can easily seen to be non-symplectic. Thus, in the following sections we seek to classify all such isometries of $L$.

\renewcommand{\arraystretch}{1.5}
\section{Non-symplectic automorphisms I: Classification for $\Lambda$}\label{sec:nonsympl}
Suppose $G$ is cyclic and generated by a non-symplectic element of maximal order $m$.

\begin{cor}[{[Oguiso-Schr\"{o}er, \cite{OS11}]}]
We have $m\leq 18$ and $\phi(m)\leq 6$.
\end{cor}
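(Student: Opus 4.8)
The plan is to exploit the constraint that the reference lattice $L = U^{\oplus 3}\oplus\lat{-6}$ has rank $7$, so any isometry of finite order $m$ acts on $L\otimes\Q$, a $7$-dimensional rational vector space. First I would decompose $L\otimes\Q$ into eigenspaces under the order-$m$ isometry $\varphi$. The characteristic polynomial of $\varphi$ over $\Q$ is a product of cyclotomic polynomials $\Phi_d$ with $d\mid m$, and for $m$ itself to occur as the order we need $\Phi_m$ to appear, contributing a factor of $\phi(m)$ to the rank. Since $\operatorname{rk}(L) = 7$, this forces $\phi(m)\leq 7$; and because $\phi(m)$ is even for $m\geq 3$ while $\phi(1)=\phi(2)=1$, in fact $\phi(m)\leq 6$.

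Next I would translate $\phi(m)\leq 6$ into the bound $m\leq 18$ by running through the (finitely many) values of $m$ with $\phi(m)\leq 6$: these are $m\in\{1,2,3,4,5,6,7,8,9,10,12,14,18\}$, the largest being $m=18$ with $\phi(18)=6$. Strictly speaking, one should be slightly careful that $\phi(m)\le 7$ is what the rank bound gives directly; but since $\phi$ takes odd values only at $1$ and $2$, the bound $\phi(m)=7$ is vacuous and we get $\phi(m)\le 6$, whence the list above and $m\le 18$. This is essentially the Oguiso--Schröer argument transported from the original setting to ours, and nothing deeper than the classification of values of Euler's totient function is needed.

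The only subtlety — and the place I would be most careful — is the claim that $\Phi_m$ genuinely divides the characteristic polynomial when the \emph{order} of $\varphi$ (in our convention, the order of the induced action on $H^2$, hence on $L$) is exactly $m$. This follows because a rational matrix of order $m$ has minimal polynomial a product of distinct $\Phi_d$ with $\operatorname{lcm}$ of those $d$ equal to $m$; if $\Phi_m$ did not appear, the order would divide $\operatorname{lcm}$ of proper divisors of $m$, which is strictly less than $m$ unless $m$ is a prime power (and even then one checks the argument still forces $\Phi_m$ or a contradiction with maximality). Once this eigenvalue bookkeeping is in place, the corollary is immediate, so the main obstacle is really just pinning down this elementary-but-essential divisibility statement rather than any hard computation; I would simply cite \cite{OS11} for the statement and remark that it is an immediate consequence of $\operatorname{rk}H^2(X,\Z) = 7$.
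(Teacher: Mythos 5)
The paper offers no proof of this corollary at all: it is imported verbatim from Oguiso--Schr\"oer \cite{OS11}, so there is no internal argument to compare yours against, and I have to judge your proposal on its own terms. Your endgame is fine --- once $\phi(m)\leq 7$ is known, parity of Euler's function gives $\phi(m)\leq 6$, and your enumeration of the $m$ with $\phi(m)\leq 6$ (maximum $m=18$) is correct.

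The problem is the step you yourself single out as delicate: it is simply not true that an integral isometry of order exactly $m$ must have $\Phi_m$ dividing its characteristic polynomial. An isometry with characteristic polynomial $\Phi_2\Phi_3$ has order $6$, yet $\Phi_6$ does not occur. Your attempted repair gets the dichotomy exactly backwards: the least common multiple of the proper divisors of $m$ is strictly less than $m$ precisely when $m$ \emph{is} a prime power, and equals $m$ whenever $m$ has at least two distinct prime factors. Concretely, your rank count does not rule out an isometry of $L=U^{\oplus3}\oplus\lat{-6}$ of order $30$ with characteristic polynomial $\Phi_2\Phi_3\Phi_5$, of degree $1+2+4=7$; this would violate both conclusions, since $\phi(30)=8$ and $30>18$. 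What is missing is any use of the hypothesis that the element is \emph{non-symplectic}, which is where the actual constraint comes from: such an element of maximal order multiplies the symplectic form $\sigma$ by a primitive $m$-th root of unity $\zeta_m$, and since $\sigma\in H^2(X,\C)$ is an eigenvector for an action defined over $\Z$, the full Galois orbit of $\zeta_m$ appears among the eigenvalues on the transcendental lattice $T(X)$; indeed $T(X)\otimes\Q$ becomes a $\Q(\zeta_m)$-vector space, so $\phi(m)$ divides $\rk T(X)\leq 7$. Only with this input does $\Phi_m$ genuinely divide the characteristic polynomial, after which your parity argument and enumeration close the proof. If you want to keep the statement as a citation, just cite \cite{OS11}; if you want to prove it, the eigenvalue of $\sigma$ is the ingredient you cannot omit.
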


Moreover, in this case $T_G(X)$ has signature $(1,\rk T_G(X)-1)$ (because it contains an invariant ample class) and $S_G(X)$ has signature $(2,\rk S_G(X)-2)$.
Let now $G\cong \Z/p\Z$ be a cyclic group of non-symplectic automorphisms on a fourfold $X$ of Kummer type. By the considerations at the beginning of \Ref{subsec}{lat} we obtain a $G$-equivariant embedding of $H^2(X,\Z)$ into $\Lambda$. Conversely, $H^2(X,\Z)$ can be considered as the orthogonal complement of a square $6$ vector $v\in\Lambda$, which will be referred to as the \em Mukai vector \em for $X$. If the action of $G$ on $A_X$ is trivial, we have
\[ S_G(X)\cong S_G(\Lambda) \quad \text{and}\quad T_G(X)\cong v^\perp\subset T_G(\Lambda).\]

Thus $S_G(\Lambda)$ and $T_G(\Lambda)$ have signature $(2,*)$.

If $G$ acts as $-1$ on $A_X$ -- this can only happen for $p=2$ -- we have
\[ T_G(X)\cong T_G(\Lambda) \quad \text{and} \quad S_G(X)\cong v^\perp\subset S_G(\Lambda).\]
Thus in this case $S_G(\Lambda)$ has signature $(3,*)$ and $T_G(\Lambda)$ has signature $(2,*)$.

It follows from \Ref{lem}{G_tors} that all lattices $S_G(\Lambda)$ and $T_G(\Lambda)$ are $p$-elementary, thus their descriminant group is isomorphic to $(\Z/p\Z)^a$ for some integer $a\geq0$.

Now, in order to classify non-symplectic automorphisms of fourfolds of Kummer type of prime order $p$ in the next section, we first study $p$-elementary sublattices of $\Lambda\cong U^{\oplus 4}$ that might occur as (co-)invariant lattices of isometries of $\Lambda$ of order $p$. Note that this list, in principal, can be used to classify non-symplectic automorphisms of manifolds of Kummer type of any dimension ($\geq 4$). In the case $p=2$ we add a column '$\delta$' to indicate whether the quadratic form of the discriminant group of the lattices at hand is integer valued ($\delta=0$) or not ($\delta=1$).

\subsection{{\boldmath$p=2$} and {\boldmath$\textrm{sign}(S_G(\Lambda))=(2,*)$}}
\begin{prop}\label{prop:lattice_lambda_2a}
The following is a complete list of coinvariant lattices ($S_G(\Lambda)$) and invariant lattices ($T_G(\Lambda)$) of even rank and signature $(2,*)$ of order two isometries of $\Lambda$.
\end{prop}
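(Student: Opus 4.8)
The strategy is to translate the geometric problem into a purely lattice-theoretic enumeration, and then to apply Nikulin's classification of $2$-elementary lattices (\Ref{thm}{2el}) together with the glueing constraints coming from the fact that $S_G(\Lambda)$ and $T_G(\Lambda)$ are mutually orthogonal complements inside the unimodular lattice $\Lambda = U^{\oplus 4}$. First I would recall from \Ref{lem}{G_tors} and \Ref{lem}{aleqm} that for $G = \langle \varphi \rangle$ of order $2$ both $S_G(\Lambda)$ and $T_G(\Lambda)$ are $2$-elementary even lattices, say with invariants $(r_S, a_S, \delta_S)$ and $(r_T, a_T, \delta_T)$; moreover $r_S + r_T = 8$, and since $\Lambda$ is unimodular the discriminant forms satisfy $q_{T_G(\Lambda)} \cong -q_{S_G(\Lambda)}$, which in the $2$-elementary case forces $a_S = a_T =: a$ and $\delta_S = \delta_T =: \delta$. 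So really a single triple $(r, a, \delta)$ (with $r := r_S$) governs the pair.

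Next I would impose the standing hypothesis of this subsection: $\mathrm{sign}(S_G(\Lambda)) = (2, r-2)$, hence $\mathrm{sign}(T_G(\Lambda)) = (1, r_T - 1) = (1, 7 - r)$ — note $T_G(\Lambda)$ is hyperbolic, so \Ref{thm}{2el} applies directly to it with invariants $(8-r, a, \delta)$, while $S_G(\Lambda)$ has signature $(2,*)$ and one must either apply the signature-$(2,*)$ version of Nikulin's existence statement or split off a hyperbolic plane where possible. Then the core of the argument is bookkeeping: run through the admissible ranks $r$ (the constraint that $S_G(\Lambda)$ has signature $(2,\ast)$ and sits in $U^{\oplus 4}$ bounds $2 \le r \le 6$, with $r$ even so $r \in \{2,4,6\}$, and symmetrically $8-r \in \{2,4,6\}$ for $T_G(\Lambda)$ to be hyperbolic of signature $(1,*)$ this actually needs $r \le 6$), and for each $r$ list the values of $a$ with $0 \le a \le \min(r, 8-r)$ and $a \equiv r \pmod 2$, discarding those $(r,a,\delta)$ violating the congruence conditions of \Ref{thm}{2el} for \emph{either} member of the pair. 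Existence of an actual isometry $\varphi \in O(\Lambda)$ realising a given admissible pair follows because any primitive embedding $T_G(\Lambda) \hookrightarrow \Lambda$ with orthogonal complement of the prescribed discriminant form extends the involution $\mathrm{id} \oplus (-\mathrm{id})$; here I would invoke \Ref{thm}{uniclat} to control uniqueness of the lattices in question and Nikulin's existence criterion for primitive embeddings into unimodular lattices to guarantee the embedding exists for each surviving triple.

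The main obstacle I anticipate is not any single deep step but the exhaustiveness and correctness of the case analysis: one must be careful that the $\delta$-invariant is genuinely constrained (a triple may be realisable with $\delta = 1$ but not $\delta = 0$, or the congruence "if $\delta = 0$ then $r \equiv 2 \pmod 4$" may kill one value for $S_G(\Lambda)$ but its mirror for $T_G(\Lambda)$ reads "if $\delta = 0$ then $8 - r \equiv 2 \pmod 4$", and these two conditions together can be strictly stronger than either alone), and that one has correctly matched each $S_G(\Lambda)$ with the unique compatible $T_G(\Lambda)$. A secondary subtlety is the boundary cases $a = r$ or $a = 8-r$, where the final two bullet conditions of \Ref{thm}{2el} ("if $\delta = 0$ and $a = r$ then $r \equiv 2 \pmod 8$") become active and must be checked on both sides. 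Once the finite list of triples $(r,a,\delta)$ is pinned down, I would present them in the table, identifying each lattice by name (direct sums of $U$, $U(2)$, $\langle \pm 2 \rangle$, $D_4$, $E_8$-type pieces, etc.) via \Ref{thm}{uniclat}, and I would remark that the four components of the moduli space (\Ref{prop}{mono_kum}) do not affect this lattice-level list since we are classifying sublattices of $\Lambda$ up to $O(\Lambda)$-equivalence.
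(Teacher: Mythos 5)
There is a genuine error at the very start of your signature bookkeeping, and it propagates through the whole case analysis. This proposition concerns the case where $G$ acts trivially on $A_X$, so that (as stated just before the proposition) \emph{both} $S_G(\Lambda)$ and $T_G(\Lambda)$ have signature $(2,*)$: since $\Lambda=U^{\oplus4}$ has signature $(4,4)$, if $S_G(\Lambda)$ has signature $(2,r-2)$ then $T_G(\Lambda)$ has signature $(2,6-r)$, not $(1,7-r)$ as you write (your positive parts sum to $3$ instead of $4$). The hyperbolic situation you describe, with $T_G(\Lambda)$ of signature $(1,*)$, is the one of \Ref{ssec}{2var}, where $G$ acts by $-1$ on $A_X$; it is treated separately. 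The error is not cosmetic: feeding $T_G(\Lambda)$ into \Ref{thm}{2el} as a hyperbolic lattice of rank $8-r$ imposes the wrong congruence conditions. For instance, the pair $S_G(\Lambda)=T_G(\Lambda)=U^{\oplus2}$ (entry $2$ of the table, rank $4$, $a=0$) would be excluded by your criterion, because a hyperbolic $2$-elementary lattice of rank $4$ with $a=0$ would have to satisfy $r\equiv 2\pmod 8$ and does not exist, whereas the signature-$(2,2)$ lattice $U^{\oplus2}$ certainly does and belongs to the list.

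A second, related gap: once the signatures are corrected, the rank-$2$ members of the pairs ($S_G(\Lambda)$ for $r=2$, and $T_G(\Lambda)$ for $r=6$) are \emph{positive definite} of signature $(2,0)$. Nikulin's results (\Ref{thm}{2el}, \Ref{thm}{uniclat}) give existence and uniqueness only for indefinite lattices, so these definite binary pieces must be identified from a table of definite even lattices --- this is exactly what the paper does via \cite[Table~15.1, p.360]{CS99} to obtain $\lat{2}^{\oplus2}$ --- and your outline has no tool for this step. The remainder of your plan (equality $a_S=a_T$ and $\delta_S=\delta_T$ from unimodularity of $\Lambda$, the bound $2\le r\le 6$, the extension of $\id\oplus(-\id)$ to $\Lambda$, which indeed works because $-\id$ acts trivially on a $2$-elementary discriminant group, and the symmetry of the list in $S$ and $T$) is sound and matches the paper's argument, but the proof cannot be completed as written until the signature of $T_G(\Lambda)$ is fixed and the definite rank-$2$ case is handled.
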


We call an isometry \em induced \em if $T_G(\Lambda)$ contains a copy of $U$.

\begin{tabular}{|c|c|c|c|c|c|c|}\hline
No. & $S_G(\Lambda)$ & $T_G(\Lambda)$  & $\rk(T_G(\Lambda))$ & $a$ & $\delta$ & is induced\\\hline\hline
$1$ & $U^{\oplus2}\oplus \lat{-2}^{\oplus2}$& $\lat{2}^{\oplus2} $& $2$& $2$ &$1$ & no\\\hline
\hline
$2$ & $U^{\oplus2}$ & $U^{\oplus2}$ & $4$ & $0$ & $0$ & yes\\\hline
$3$ & $U\oplus U(2)$ & $U\oplus U(2)$& $4$ & $2$ &$0$ & yes\\\hline
$4$ & $U\oplus \lat{2}\oplus \lat{-2}$&$U\oplus \lat{2}\oplus \lat{-2}$& $4$ & $2$ & $1$ &yes\\\hline
$5$ & $U(2)\oplus U(2)$ & $U(2)\oplus U(2)$ & $4$ & $4$ & $0$ & no\\\hline
$6$ & $U(2)\oplus \lat{2}\oplus \lat{-2}$ & $U(2)\oplus \lat{2}\oplus \lat{-2}$ & $4$ & $4$ &$1$ & no\\\hline
\hline
$7$ & $\lat{2}^{\oplus2} $ &$ U^{\oplus2}\oplus \lat{-2}^{\oplus2}$ & $6$ & $2$ &$1$ & yes\\\hline
\end{tabular}

\begin{proof}
When $\rk(T_G(\Lambda))=2$, $T_G(\Lambda)$ is positive definite, then we can find those lattices in the list \cite[Table~15.1, p.360]{CS99}. When $\rk(T_G(\Lambda))=4$ or $6$, we apply \Ref{thm}{2el} to find the lattices. Since the list above is symmetric in $S$ and $T$ we proceed in the same way for $S_G(\Lambda)$.
\end{proof}

\subsection{{\boldmath$p=2$} and {\boldmath$\textrm{sign}(S_G(\Lambda))=(3,*)$}}\label{ssec:2var} Now we consider the case where $S_G(\Lambda)$ has signature $(3,*)$ and $T_G(\Lambda)$ signature $(1,*)$.

\begin{tabular}{|c|c|c|c|c|c|}\hline
No. & $S_G(\Lambda)$ & $T_G(\Lambda)$ & $\rk(T_G(\Lambda))$ & $a$ & $\delta$ \\\hline\hline
$1$ & $U^{\oplus3}$ & $U$ & $2$ & $0$ & $0$ \\\hline
$2$ & $U^{\oplus2}\oplus U(2)$ & $U(2)$ & $2$ & $2$ & $0$ \\\hline
$3$ & $U^{\oplus2}\oplus \lat{2}\oplus \lat{-2}$ & $\lat{2}\oplus \lat{-2}$ & $2$ & $2$ & $1$ \\\hline\hline
$4$ & $U\oplus \lat{2}^{\oplus2}$ & $U\oplus \lat{-2}^{\oplus2}$ & $4$ & $2$ & $1$\\\hline
$5$ & $\lat{2}^{\oplus3}\oplus \lat{-2}$ & $\lat{2}\oplus \lat{-2}^{\oplus3}$  & $4$ & $4$ & $1$ \\\hline
\end{tabular}

\begin{proof}
This is a direct application of \Ref{thm}{2el}. 
\end{proof}

\subsection{{\boldmath$p=3$}}
The following is a complete list of coinvariant lattices ($S_G(\Lambda)$) and invariant lattices ($T_G(\Lambda)$) of even rank and signature $(2,*)$ of order three isometries of $\Lambda$. We call an isometry \em induced \em if $T_G(\Lambda)$ contains a copy of $U$.

\begin{tabular}{|c|c|c|c|c|c|}\hline
No. & $S_G(\Lambda)$ & $T_G(\Lambda)$ & $\rk(T_G(\Lambda))$ & $a$  & is induced\\\hline\hline
$1$ & $U^{\oplus2}\oplus A_2(-1)$ & $A_2$ & $2$ & $1$ & no\\\hline\hline
$2$ & $U^{\oplus2}$ & $U^{\oplus2}$ & $4$ & $0$ & yes\\\hline
$3$ & $U\oplus U(3)$ & $U\oplus U(3)$ & $4$ & $2$ & yes\\\hline
$4$ & $A_2$ & $U^{\oplus2}\oplus A_2(-1)$ & $6$ & $1$ & yes\\\hline
\end{tabular}

\begin{proof}
When $\rk(T_G(\Lambda))=2$, $T_G(\Lambda)$ is positive definite. We can then find those lattices in the list \cite[Table~15.1, p.360]{CS99} and $S_G(\Lambda)$ is uniquely determined because we can write it as $U\oplus S'$ by \Ref{thm}{split} with $S'$ hyperbolic of rank $4$, which is unique by \Ref{thm}{pel}.\\
When $\rk(T_G(\Lambda))=6$, we can do the same thing as above, exchanging the roles of $S_G(\Lambda)$ and $T_G(\Lambda)$.\\
When $\rk(T_G(\Lambda))=4$, we deal with the cases $a=0$ and $1$ by splitting a copy of $U$ again by \Ref{thm}{split} and looking for the last piece in the list \cite[Table~15.2a, p.362]{CS99}. The only case left is $a=2$ by \Ref{lem}{aleqm}. The isomorphism classes of $3$-elementary discriminant forms of rank $2$ are those of $U(3)$ and $A_2^{\oplus 2}$. But the signature of our lattice must be $(r_+,r_-)=(2,2)$, hence the case of $A_2^{\oplus 2}$ is impossible by \cite[Theorem 1.10.1]{Nik79}, because $r_+-r_-\neq 4$. The only possible discriminant form is the one of $U(3)$. \Ref{thm}{uniclat} applies here, so the lattice is isomorphic to $U\oplus U(3)$.
\end{proof}



\subsection{{\boldmath$p=5$}}
For $p=5$ the rank of $S_G(\Lambda)$ must be four in order to obtain an isometry with determinant $1$. Furthermore we exclude the cases $a=0,2$, using Proposition 2.6 of \cite{MTW15}. We are left with the following example:

\begin{tabular}{|c|c|c|c|c|}\hline
$S_G(\Lambda)$ & $T_G(\Lambda)$ & $\rk(T_G(\Lambda))$ & $a$  & is induced\\\hline\hline
$U\oplus H_5$ & $U\oplus H_5$ & $4$ & $1$ & yes\\\hline
\end{tabular}

\begin{proof}
We begin by looking for $S_G(\Lambda)$. By \Ref{lem}{aleqm}, we always have $m=1$ (so $\rk(S_G(\Lambda))=4$) and $a\leq m$. Moreover, $a$ is odd by \Ref{prop}{BCMS14p}, so $a=1$. We can then split a copy of $U$ by \Ref{thm}{split} and look for the last piece in the list \cite[Table~15.2a, p.362]{CS99}. Since in this case $q_{T_G(\Lambda)}=-q_{S_G(\Lambda)}\simeq q_{S_G(\Lambda)}$, we deduce from \Ref{thm}{uniclat} that $T_G(\Lambda)\simeq S_G(\Lambda)$.
\end{proof}


\subsection{{\boldmath$p=7$}}
Also for $p=7$ there is a unique possibility:

\begin{tabular}{|c|c|c|c|c|}\hline
$S_G(\Lambda)$ & $T_G(\Lambda)$ &  $\rk(T_G(\Lambda))$ & $a$  & is induced\\\hline\hline
$U^{\oplus 2}\oplus K_7(-1)$ & $K_7$ & $2$ & $1$  & no\\\hline
\end{tabular}

\begin{proof}
We begin by looking for $S_G(\Lambda)$. By \Ref{lem}{aleqm}, we always have $m=1$ (so $\rk(S_G(\Lambda))=4$) and $a\leq m$. Moreover, $a$ is odd by \Ref{prop}{BCMS14p}, so $a=1$. We can then split a copy of $U$ by \Ref{thm}{split} and look for the opposite of the last piece (for it to be positive definite) in the list \cite[Table~15.1, p.360]{CS99}. The lattice $T_G(\Lambda)$ is then positive definite with rank $2$ and $a=1$, we find it also in the list \cite[Table~15.1, p.360]{CS99}.
\end{proof}


\begin{oss}
For $p\geq 3$, in principle we could have several isometries of order $p$ which are induced by automorphisms. However, when the covariant lattice is isometric to the covariant lattice of a $K3$ surface with an automorphism of order $p$, we have a unique action on the lattice. Indeed, in \cite[Prop. 9.3]{AST11}, it is proven that the connected components of the moduli space of $K3$ surfaces with an automorphism of order $p$ is given by the isometry classes of the covariant lattice. If we had more than one isometry for each of them, this would be impossible, hence our claim. In particular, all covariant lattices in the above lists for $p=3,5,7$ occur as covariant lattices on $K3$ surfaces.
\end{oss}

\section{Non-symplectic automorphisms II: Classification in dimension four}\label{sec:nonsymp4}
Now, we specialise to the case of fourfolds. Lattice-theoretically this is done by chosing a length $6$ vector $v$ in $\Lambda$ which will serve as a Mukai vector. The cohomology $H^2(X,\Z)$ will then be isometric to $v^\perp$. Some of the invariant or coinvariant lattices appearing in the previous section do not contain any such vector. But some lattices admit two non-conjugate choices. Keeping this in mind, we will give a list of all configurations of lattices occuring as invariant and co-invariant lattices of prime order non-symplectic automorphisms of Kummer 4-type manifolds. Note that for each such configuration there are four families in the moduli space of marked manifolds. Each two of them are geometrically identical, but the marking differs by $-1$. Finally the last family is obtained by composing with the 'dual' isometry. The geometric meaning of this isometry is only understood for induced automorphisms, where we simply construct the 'dual manifold' as the moduli space of objects on the dual abelian variety.

If we choose the vector $v$ to be in the invariant lattice $T_G(\Lambda)$, the induced action on $H^2(X,\Z)$ has a trival action on the discriminant $A_X$. In this situation the discriminant group of $S_G(X)$ is isomorphic to $(\Z/p\Z)^a$ for some integer $a(S_G(X))=a\geq 0$.

If we choose $v$ to belong to the coinvariant lattice $S_G(\Lambda)$, we obtain a non-trivial action on $A_X$. Note that since the action on $A_X$ must be given by $\pm1$ (see \Ref{sec}{prelim}), this can only happen for $p=2$. In this case the discriminant group of the invariant lattice $T_G(X)$ is isomorphic to $(\Z/p\Z)^a$ for some integer $a(T_G(X))=a\geq0.$

In every table we add a column '$\dim$' which gives the dimension of the family of manifolds with the corresponding automorphism.

We indicate by 'ind' all entries corresponding to induced automorphisms and by 'nat' those which are even natural.

The following classification is complete with respect to the possible lattices $S_G(X)$ and $T_G(X)$, however to count the number of connected components of the moduli space of Kummer fourfolds with this prescribed $G$ action, one needs to count embeddings of $v\in T_G(\Lambda)$ (or $S_G(\Lambda)$ in the appropriate cases) up to isometry.

\subsection{{\boldmath$p=2$}, trivial action on {\boldmath$A_X$}}
If the automorphism acts trivially on $A_X$, we have $S_G(X)\cong S_G(\Lambda)$ and $T_G(X)$ is obtained as the orthogonal complement of the square $6$ class $v\in T_G(\Lambda)$. Note that $S_G(X)$ is $2$-elementary.  We denote $a(S_G(X))$ by $a$ and $\div_{T_G(\Lambda)}(v)$ by $d$. 

\vspace{10pt}
\begin{tabular}{|c|c|c|c|c|c|c|c|}\hline
No. & $S_G(X)$ & $T_G(X)$ & $d$ & $a$ & $\delta(S_G(X))$ &  $\dim$ & ind/nat\\\hline\hline
$2$ & $U^{\oplus2}$  & $U\oplus \lat{-6}$& $1$ & $0$ & $0$ & $2$ & nat\\\hline
$3$ & $U\oplus U(2)$ & $U(2)\oplus \lat{-6}$& $1$ & $2$ & $0$ & $2$ & nat\\\hline
$4.1$ & $U\oplus \lat{2}\oplus \lat{-2}$& $\lat{2}\oplus \lat{-2}\oplus \lat{-6}$& $1$ & $2$ & $1$ & $2$ & nat\\\hline
$4.2$ & $U\oplus \lat{2}\oplus \lat{-2}$& $\lat{2}\oplus A_2(-1)$ & $2$ & $2$ & $1$  & $2$ & ind\\\hline
$6.1$ & $U(2)\oplus \lat{2}\oplus \lat{-2}$ & $U(2)\oplus \lat{-6}$ & $2$ & $4$ & $1$ & $2$ & no\\\hline
\hline
$7.1$ & $\lat{2}^{\oplus2} $ & $U\oplus \lat{-2}^{\oplus2} \oplus\lat{-6}$ & $1$ & $1$  & $1$ & $0$ & nat\\\hline
$7.2$ & $\lat{2}^{\oplus2}$ & $U\oplus \lat{-2}\oplus A_2(-1)$ & $2$ & $2$ & $1$ & $0$ & ind\\\hline

\end{tabular}


\subsection{{\boldmath$p=2$}, non-trivial action on {\boldmath$A_X$}}
If the automorphism acts as $-1$ on $A_X$ we have $T_G(X)\cong T_G(\Lambda)$ and $S_G(X)$ is obtained as the orthogonal complement of a square $6$ element $v\in S_G(\Lambda)$. Note that $T_G(X)$ is $2$-elementary. We denote $a(T_G(X))$ by $a$ and $\div_{S_G(\Lambda)}(v)$ by $d$.

Note that none of these examples can be realised by an induced automorphism. The column 'MS' indicates whether the manifolds in this family or moduli spaces of stable objects or not. We denote by 'LFwS' those families of moduli spaces which are (more strongly) relative compactified Jacobians admitting a lagrangian fibration with section.

\vspace{10pt}
\begin{tabular}{|c|c|c|c|c|c|c|c|}\hline
No. & $S_G(X)$ & $T_G(X)$ & $d$ & $a$ & $\delta(T_G(X))$ & $\dim$ & MS\\\hline\hline
$1$ & $U^{\oplus2}\oplus \lat{-6}$ & $U$ & $1$ & $0$ & $0$ & $3$ & yes\\\hline
$2$ & $U\oplus U(2)\oplus\lat{-6}$ & $U(2)$ & $1$ & $2$ & $0$ & $3$ & no\\\hline
$3.1$ & $U\oplus\lat{2}\oplus \lat{-2}\oplus \lat{-6}$& $\lat{2}\oplus \lat{-2}$ & $1$ & $2$ & $1$  & $3$ & no\\\hline
$3.2$ & $U^{\oplus2}\oplus\lat{-6}$ & $\lat{2}\oplus \lat{-2}$ & $2$ & $2$ & $1$  & $3$ & LFwS\\\hline
\hline
$4$ & $\lat{2}^{\oplus2}\oplus \lat{-6}$ & $U\oplus \lat{-2}^{\oplus2}$ & $1$ & $2$ & $1$  & $1$ & yes\\\hline
$5.1$ & $\lat{2}^{\oplus2}\oplus\lat{-6}$ & $\lat{2}\oplus \lat{-2}^{\oplus3}$ & $2$ & $4$ & $1$  & $1$ & LFwS\\\hline
$5.2$ & $A_2(2)\oplus\lat{-2}$ & $\lat{2}\oplus \lat{-2}^{\oplus3}$ & $2$ & $4$ & $1$  & $1$ & yes\\\hline
\end{tabular}


\subsection{{\boldmath$p=3$}}

For $p=3$ the action on $A_X$ is trivial, hence $S_G(\Lambda)=S_G(X)$ and $T_G(\Lambda)$ have signature $(2,*)$ and the latter is obtained as the orthogonal of a square $6$ class $v\in T_G(\Lambda)$.

\vspace{10pt}
\begin{tabular}{|c|c|c|c|c|c|c|}\hline
No. & $S_G(X)$ & $T_G(X)$ & $\div_{T_G(\Lambda)}(v)$ & $a(S_G(X))$ & $\dim$ & ind/nat\\\hline\hline
$1$ & $U^2\oplus A_2(-1)$  & $\lat{2}$ & $2$& $1$ & $2$ & no\\\hline
\hline
$2$ & $U^{\oplus2}$  & $U\oplus \lat{-6}$ &$1$ & $0$ & $1$ & nat\\\hline
$3.1$ & $U(3)\oplus U$  & $U(3)\oplus \lat{-6}$ & $1$ & $2$ & $1$ & nat\\\hline
$3.2$ & $U(3)\oplus U$  & $U\oplus \lat{-6}$ & $3$ & $2$ & $1$ & ind\\\hline
\hline
$4.1$ & $A_2$  & $U\oplus A_2(-1)\oplus \lat{-6}$ &$1$& $1$ & $0$ & nat\\\hline
$4.2$ & $A_2$ & $U^{\oplus2}\oplus \lat{-2}$&$2$ & $1$ & $0$ & ind\\\hline
\end{tabular}



\subsection{{\boldmath$p=5$}}
For $p=5$  Moreover, the determinant must be $1$ on $H^2(X)$ to ensure that it is a monodromy operator, thus $S_G$ has even rank. There is only one family which is induced and where the divisor of $v$ is one:

\vspace{10pt}
\begin{tabular}{|c|c|c|c|c|c|}\hline
$S_G(X)$ & $T_G(X)$ &$\div_{T_G(\Lambda)}(v)$ & $a(S_G(X))$ & $\dim$ & ind/nat\\\hline\hline
$U\oplus H_5$ & $H_5\oplus \lat{-6}$ & $1$ & $1$ & $0$ & nat\\\hline
\end{tabular}


\section{Examples of non-symplectic automorphisms}
In this section we collect all examples of non-symplectic automorphisms on fourfolds of Kummer type.

\subsection{{\boldmath$p=2$}, trivial action on \boldmath{$A_X$}}
\begin{ex}[{[Invariant lattices $U\oplus\lat{-6}$, $U(2)\oplus\lat{-6}$, $\lat{2}\oplus\lat{-2}\oplus\lat{-6}$ and $U\oplus\lat{-2}^{\oplus2}\oplus\lat{-6}$]}]
These are the natural automorphisms corresponding to the automorphisms of abelian surfaces in Section 4.1 of \cite{MTW15}.
\end{ex}

\begin{ex}[{[Invariant lattices $\lat{2}\oplus A_2(-1)$ and $U\oplus\lat{-2}\oplus A_2(-1)$]}]
These are both induced automorphism which are not natural. The underlying abelian surfaces have invariant lattices (which generically is isomorphic to the Picard lattice) isometric to $\lat{2}\oplus\lat{-2}$ and $U\oplus\lat{-2}^{\oplus2}$ respectively. The examples can be constructed as follows: Choose an ample class $H$ on the abelian surface of square $6$ which has divisibility $2$ inside the invariant lattice. The Albanese fibre of the moduli space of sheaves with Mukai vector $v=(0,H,0)$ is a fourfold of Kummer type and carries the induced automorphism from the surface. Note that the moduli spaces occurring here are relative compactified Jacobians over the linear system $H$ of genus $4$.
\end{ex}

\subsection{{\boldmath$p=2$}, non-trivial action on \boldmath{$A_X$}}
\begin{ex}[{[Invariant lattices $\lat{2}\oplus\lat{-2}$ and $\lat{2}\oplus \lat{-2}^{\oplus3}$]}]
Let $A$ be an abelian surface admitting a polarisation of type $(1,3)$, i.e.\ there is an ample class $H$ of square $6$ yielding a $6:1$-cover of $\PP^2$. If we choose the Mukai vector $v=(0,H,-3)$ the corresponding moduli space $M(v)$ is the relative compactified Jacobian of degree $0$ over the genus $4$ linear system $|H|=(\PP^2)^\vee$. The involution which is given by $-1$ on the smooth fibres restricts to a lagrangian fibration with section on the Albanese fibre $X$ and yields thus an example of a non-symplectic involution on fourfolds of Kummer type. If $A$ has Picard rank one, a direct calculation shows that 
\[NS(X)\cong v^\perp\cap\, \widetilde{H}^{1,1}(A)\cong \lat{2}\oplus\lat{-2}\] and $S_G(X)\cong U^{\oplus2}\oplus\lat{-6}$ as coinvariant lattice.

If we degenerate $A$ such that $\NS(A)\cong \lat{6}\oplus\lat{-2}^{\oplus2}$, we generically obtain for the Albanese fibre $NS(X)\cong \lat{2}\oplus\lat{-2}^{\oplus3}$. Thus $X$ still possesses a lagrangian fibration with section which is preserved by the involution. In fact, the involution acts fibrewise. 
\end{ex}

\begin{ex}[{[Invariant lattices $U$ and $U\oplus\lat{-2}^{\oplus2}$]}]
We consider the same surfaces as in the preceding example but this time we choose the Mukai vector $v=(0,H,0)$. Thus our manifolds are Albanese fibres of relative compactified Jacobians of degree $3$ this time, which again is fibred over $|H|$. The automorphism -- let us call it $\sigma$ -- is again given by $-1$ on the smooth fibres. Indeed, it respects the lagrangian fibration, inducing an involution of the base $|H|= (\PP^2)^\vee$. A non-trivial involution on $\PP^2$ must fix a line and a point, on the other hand, this involution must respect the stratification of $|H|$ by the type of singularity of the corresponding curves. In particular, it preserves the dual curve of the branching $A\rightarrow \PP^2$. Both curves are of degree $18$. Thus we conclude that $\sigma$ acts trivially on $|H|$ and we have a fibrewise action which can only be given by $-1$. (Note that translations by two-torsion points yield symplectic automorphism of the manifold.) 


As before we obtain the second family by the degeneration of our abelian surface $A$ such that $NS(A)\cong \lat{6}\oplus \lat{-2}^{\oplus2}$.

\end{ex}

\subsection{{\boldmath$p=3$}}
\begin{ex}[{[Invariant lattices $U\oplus\lat{-6}$ ($\div=1$), $U(3)\oplus\lat{-6}$, $U\oplus A_2(-1)\oplus\lat{-6}$]}]
Again, these are the natural automorphisms corresponding to \cite[Sect.\ 4.2]{MTW15}
\end{ex}

\begin{ex}[{[Invariant lattices $U\oplus\lat{-6}$ ($\div=6$) and $U^{\oplus2}\oplus\lat{-2}$]}]
Again, these are induced but not natural examples. The corresponding invariant lattices for the abelian surfaces are $U(3)$ and $U\oplus A_2(-1)$ (respectively). Again we can choose ample classes of square $6$ with divisibility $6$ and $2$ (resp.).
\end{ex}

\subsection{{\boldmath$p=5$}}
The only example occuring is natural, corresponding to Example 3.12 in \cite{MTW15}.

\subsection{Natural automorphisms}
In the particular case of natural automorphisms, the study can be made much more precise. In fact, one can compute the fixed loci of those automorphisms. This is done in \cite{Kevin} for all natural automorphisms of Kummer fourfolds. Let us sketch the computation of the fixed locus in one example.

\begin{ex}[{[$p=3$ and invariant lattice $U\oplus A_2(-1)\oplus\lat{-6}$]}]
This automorphism comes from Example 3.11 of \cite{MTW15}. Let $E_6=\C/\lat{1,\xi_6}$ where $\xi_6=e^{\frac{i\pi}{3}}$,  and consider the abelian surface $A=E_6\times E_6$ with the automorphism $\varphi=(\xi_3,\xi_3)$. We want to compute the fixed locus of the natural automorphism $\varphi^{[3]}$ on the generalised Kummer $K^3(A)$, coming from $\varphi$ on $A$. The fixed locus will consist of points supported by $\{z,z',z''\}$ in the Hilbert scheme of $3$ points $A^{[3]}$. When $z$, $z'$ and $z''$ are not distinct, the fixed locus contains fat points with ideals of colength $2$ or $3$ fixed by the action of $\varphi$.

There are two possibilities for the support of fixed points of $\varphi^{[3]}$. The first one is of the form $\{z,\varphi(z),\varphi^2(z)\}$, where $z$ is any point in $A$. When $z$ varies in $A$, this gives a connected component of the fixed locus. In particular, in the limit $z\rightarrow z_0,$ where $z_0$ is one of the $9$ fixed points of $\varphi$ in $A$, the support becomes $\{z_0,z_0,z_0\}$. We end up with a copy of $\PP^1$ of points in $K^3(A)$. In fact, this connected component is biholomorphic to a minimal resolution of the $9$ singularities of $A/\lat{\varphi}$, which is a smooth surface of Euler characteristic $15$.

The second possibility for the support is of the form $\{z,z',z''\},$ where $z$, $z'$ and $z''$ are fixed points of $\varphi$ and $z+z'+z''=0$. This gives $21$ additional fixed points of $\varphi^{[3]}$. Hence, the fixed locus of $\varphi^{[3]}$ consists of a biholomorphic copy of a minimal resolution of the singularities of $A/\lat{\varphi}$ and $21$ isolated points.
\end{ex}

\section{Symplectic automorphisms}\label{sec:sympl}
Let now $G$ be a group of symplectic automorphisms. Then we have the following:
\begin{lem}\label{lem:algaction}
Let $X$ be a Kummer type fourfold and let $G\subset Aut(X)$ be a finite symplectic group. Then the following assertions are true:
\begin{enumerate}
\item $S_G(X)$ is nondegenerate and negative definite.
\item $T(X)\subset T_G(X)$ and $S_G(X)\subset S(X)$.
\item $S_G(X)$ contains no wall divisors.
\item The action of $G$ on $A_{S_G(X)}$ is trivial.
\end{enumerate}
\begin{proof}
The proof of the first three items is taken from \cite[Lemma 3.5]{Mon14}, we sketch it here for the reader's convenience. The invariant lattice $T_G(X)$ contains $T(X)$ because $G$ is symplectic and, after tensoring with $\mathbb{R}$, it contains an invariant \kahl class because $G$ is finite. Therefore its orthogonal $S_G(X)$ is negative definite. Since $T_G(X)\otimes \mathbb{R}$ contains a \kahl class, its orthogonal can not contain wall divisors.

The proof of the last item for the \kntiposp case works well also in the generalised Kummer case. Indeed, let us suppose that $G$ does not act trivially on $A_{S_G(X)}$. Then, the proof of \cite[Lemma 3.4]{Mon14} can be applied to our case to find a wall divisor inside $S_G(X)$, which gives the desired contradiction. 
\end{proof}
\end{lem}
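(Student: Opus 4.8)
The plan is to prove the four assertions of Lemma~\ref{lem:algaction} essentially by adapting the corresponding arguments for manifolds of \kntipo from \cite{Mon14}, checking at each step that nothing used there is special to the $K3^{[n]}$ case. First I would recall that, since every element of $G$ is symplectic, it acts trivially on the holomorphic two-form, hence on the transcendental lattice $T(X)$; thus $T(X)\subset T_G(X)$, and dually $S_G(X)=T_G(X)^\perp\subset T(X)^\perp=S(X)$. This gives item (2). For item (1), the key point is that $G$ is \emph{finite}: averaging a \kahl class over $G$ produces a $G$-invariant class in $T_G(X)\otimes\R$ of positive square, so $T_G(X)$ contains a positive vector; since the full lattice $H^2(X,\Z)$ has signature $(3,\rk-3)$ and $T(X)$ contributes the positive part not captured by the averaged \kahl class, the orthogonal complement $S_G(X)$ must be negative definite (in particular nondegenerate). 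This is exactly the argument of \cite[Lemma 3.5]{Mon14}, and I would cite it while spelling out the averaging step.

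Next, item (3): because $T_G(X)\otimes\R$ contains a \kahl class $\kappa$, and every wall divisor $\delta$ (of the three types listed in Section~\ref{sec:prelim}) satisfies $(\delta,\delta)<0$ while pairing nontrivially with the \kahl cone, no such $\delta$ can be orthogonal to $\kappa$; hence $S_G(X)=\kappa^\perp\cap(\text{stuff})$ contains no wall divisor. Here I would just invoke the description of the ample cone from the Preliminaries: a wall divisor is by definition orthogonal to a face of the ample cone, so it cannot be orthogonal to an interior \kahl class, and $S_G(X)\subset\kappa^\perp$.

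The last item (4) — that $G$ acts trivially on the discriminant group $A_{S_G(X)}$ — is the one I expect to be the real obstacle, and it is the place where one must genuinely recheck that the Kummer-type setting behaves like the $K3^{[n]}$ one. The strategy, following \cite[Lemma 3.4]{Mon14}, is by contradiction: if $G$ acts nontrivially on $A_{S_G(X)}$, then one produces an isometry-theoretic overlattice / glue vector inside $S_G(X)$ (or in a primitive extension of it) whose square and divisibility match one of the three wall-divisor types, contradicting item (3). Concretely one picks $g\in G$ and $v\in S_G(X)$ with $g$ acting nontrivially on the class of $v$ in $A_{S_G(X)}$, considers $v-gv\in S_G(X)$ (Lemma~\ref{lem:G_tors}), and analyses its divisibility using that $S_G(X)$ is $p$-local in the relevant sense; the numerics of the Kummer lattice $U^3\oplus\lat{-6}$ (discriminant $\Z/6\Z$, wall divisors of square $-6$ and divisibility $2,3,6$) are what guarantee that such a glue vector is forced to be a wall divisor. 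So the proof is: state that (1)--(3) are verbatim \cite[Lemma 3.5]{Mon14} after the averaging remark, and then, for (4), observe that the argument of \cite[Lemma 3.4]{Mon14} applies mutatis mutandis — the only input needed is the list of wall divisors for Kummer fourfolds, which we have, plus item (3) — to extract a wall divisor in $S_G(X)$ from a nontrivial action on $A_{S_G(X)}$, contradicting (3). The main thing to be careful about is that the wall-divisor numerics close up correctly in the $\lat{-6}$ case rather than the $\lat{-2(n-1)}$ case of $K3^{[n]}$; I would check that the divisibility-versus-square constraints still force the glue class into one of the three admissible types.
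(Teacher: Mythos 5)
Your proposal follows essentially the same route as the paper: items (1)--(3) via the averaged invariant K\"ahler class and the characterisation of wall divisors as classes orthogonal to faces of the ample cone (citing \cite[Lemma 3.5]{Mon14}), and item (4) by contradiction, transporting the glue-vector argument of \cite[Lemma 3.4]{Mon14} to extract a wall divisor in $S_G(X)$ from a nontrivial action on $A_{S_G(X)}$. The paper is equally terse about verifying that the wall-divisor numerics for $U^3\oplus\lat{-6}$ close up the argument, so your explicit flag of that point is, if anything, slightly more careful than the original.
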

In particular, we always have a group of symplectic automorphisms acting trivially on the second cohomology: the kernel of the map $\nu\,:\,Aut(X)\rightarrow O(H^2(X,\mathbb{Z}))$ is a deformation invariant and was determined by Oguiso in \cite{Ogi12}. It is isomorphic to $(\mathbb{Z}/(3))^4\rtimes \mathbb{Z}/(2)$. From now on, when we speak of a group of symplectic automorphisms we actually speak of its image via $\nu$.

\begin{prop}
Let $G\subset Aut(X)$ be a finite group of symplectic automorphisms, then $G\subset O(E_8)$ and $S_G(X)\cong S_G(E_8)(-1)$.
\begin{proof}
As proven in \Ref{lem}{algaction}, $G$ acts trivially on the discriminant group of $H^2(X,\mathbb{Z})$, therefore its action can be extended to the lattice $\Lambda\supset H^2(X,\mathbb{Z})$ trivially outside of $H^2$. Let $r$ be the rank of $S_G(X)$. Its orthogonal $T_G(\Lambda)$ has rank $8-r$, signature $(4,4-r)$ and discriminant form which is the opposite of that of $S_G(X)$. This means that $S_G(X)$ can be embedded in a unimodular negative definite lattice of rank $8$, which means $S_G(X)\subset E_8(-1)$. As $G$ acts trivially on $A_{S_G(X)}$, its action can be extended to an action on $E_8(-1)$ which is trivial on $S_G(X)^\perp$, which is our claim.
\end{proof}
\end{prop}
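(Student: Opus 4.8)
The plan is to leverage the fact, established in \Ref{lem}{algaction}, that a finite symplectic group $G$ acts trivially on the discriminant group $A_X\cong\Z/6\Z$ and on $A_{S_G(X)}$, together with the unimodular ambient lattice $\Lambda=U^4$. First I would observe that since $G$ acts trivially on $A_X$, the $G$-action on $H^2(X,\Z)$ extends to $\Lambda$ by letting $G$ act as the identity on the rank-$1$ orthogonal complement of $H^2(X,\Z)$ in $\Lambda$ (spanned by the Mukai vector $v$ with $v^2=6$, sent to $e+3f$ in the notation of \Ref{subsec}{lat}); this is well-defined precisely because the action on $A_X$ is trivial. Hence $S_G(\Lambda)=S_G(X)$ and $T_G(\Lambda)\supset\langle v\rangle$.

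Next I would analyse the invariant lattice $T_G(\Lambda)=\Lambda^G$ inside the unimodular lattice $\Lambda$ of signature $(4,4)$. Write $r=\rk S_G(X)$; by \Ref{lem}{algaction}(1) $S_G(X)$ is negative definite, so $T_G(\Lambda)$ has signature $(4,4-r)$ and rank $8-r$. Since $S_G(\Lambda)$ and $T_G(\Lambda)$ are mutually orthogonal primitive sublattices of the unimodular lattice $\Lambda$, their discriminant forms are related by $q_{T_G(\Lambda)}=-q_{S_G(\Lambda)}$ (via the glue group $\Lambda/(T_G(\Lambda)\oplus S_G(\Lambda))$, which by unimodularity realises an anti-isometry of the two discriminant forms). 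The key step is then to produce a primitive embedding $S_G(X)\hookrightarrow E_8(-1)$: I would use Nikulin's embedding criterion (\cite{Nik79}), noting that $S_G(X)$ is a negative definite lattice whose discriminant form embeds (as $-q_{T_G(\Lambda)}$) into a form carried by a positive definite rank-$(8-r)$ complement, so the orthogonal complement of $S_G(X)$ in $E_8(-1)$ is exactly (the sign-reversed) $T_G(\Lambda)$ with its trivial $G$-action; equivalently, since $E_8(-1)$ is the unique even unimodular negative definite lattice of rank $8$, one can directly glue $S_G(X)$ to a suitable negative definite completion of $T_G(\Lambda)(-1)$ along the (anti-)isometry of discriminant forms. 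Because $G$ already acts trivially on $A_{S_G(X)}$, this glued $G$-action extends to all of $E_8(-1)$ acting as the identity on $S_G(X)^\perp$, giving $G\subset O(E_8(-1))=O(E_8)$ with $S_G(E_8)(-1)\cong S_G(X)$.

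The main obstacle is verifying that the embedding $S_G(X)\hookrightarrow E_8(-1)$ exists and is compatible with extending the group action — i.e.\ that the orthogonal complement can be chosen so that the discriminant forms glue to produce a unimodular lattice of the right rank, signature and parity. This is where I expect to invoke \Ref{thm}{uniclat} or Nikulin's existence/uniqueness results for the complement $T_G(\Lambda)$: one must check $l(A_{S_G(X)})\leq \rk(S_G(X)^\perp)$ inside $E_8$, which follows from $a\leq m\leq r$ (\Ref{lem}{aleqm}) and $8-r\geq r$ being false only for large $r$, so a small separate argument (or appeal to the explicit classification of such $G$, as in \cite{Mon14}) closes the gap. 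Once the embedding is in place, the extension of the $G$-action is automatic from triviality on $A_{S_G(X)}$, exactly as in the statement, and the identification $S_G(X)\cong S_G(E_8)(-1)$ is immediate since $S_G(X)$ is by construction the coinvariant lattice of $G$ acting on $E_8(-1)$.
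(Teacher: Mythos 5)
Your proposal is correct and follows essentially the same route as the paper's proof: extend the $G$-action to $\Lambda$ using triviality on $A_X$, read off the rank, signature and discriminant form of $T_G(\Lambda)$, conclude that $S_G(X)$ embeds into the even negative definite unimodular rank-$8$ lattice $E_8(-1)$, and extend the action trivially on the complement using triviality on $A_{S_G(X)}$. The paper leaves the Nikulin existence check for the definite complement implicit, and the extra condition you worry about is automatic since $l(A_{S_G(X)})=l(A_{T_G(\Lambda)})\leq\rk T_G(\Lambda)=8-r$, so your added verification just makes the same step explicit.
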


Moreover we have the following converse:
\begin{prop}\label{prop:e8tosympl}
Let $G\subset O(E_8)$ be a group whose elements have determinant $1$ and suppose there exists a primitive embedding of $S_G(E_8(-1))$ in $U^3\oplus (-6)$. Let $X$ be a Kummer type fourfold such that $Pic(X)\cong S_G(E_8(-1))$ under the above embedding and $Pic(X)$ contains no wall divisors. Then $G$ is induced by symplectic automorphisms of $X$. 
\end{prop}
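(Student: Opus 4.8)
The plan is to transport the given $G$-action on the lattice $S_G(E_8(-1))\cong\Pic(X)$ to an action on all of $H^2(X,\Z)$, to recognise the resulting isometries as Hodge parallel transport operators, and then to use the Torelli theorem \Ref{thm}{torelli} together with the hypothesis on wall divisors in order to realise them by symplectic biholomorphisms.

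First I would extend the action. Fix the primitive embedding $\Pic(X)\cong S_G(E_8(-1))\hookrightarrow H^2(X,\Z)=U^3\oplus\lat{-6}$ of the hypothesis and set $T:=\Pic(X)^{\perp}$. Since $E_8$ is even unimodular, the discriminant groups $A_{S_G(E_8)}$ and $A_{T_G(E_8)}$ are anti-isometric via the glue $E_8/(S_G(E_8)\oplus T_G(E_8))$; as $G$ fixes the invariant sublattice $T_G(E_8)$ pointwise it therefore acts trivially on $A_{S_G(E_8)}=A_{\Pic(X)}$. Consequently, for each $g\in G$ the isometry $g\oplus\id_{T}$ of $\Pic(X)\oplus T$ extends to an isometry $g_X$ of the overlattice $H^2(X,\Z)$, and $g\mapsto g_X$ is a faithful representation of $G$ in $O(H^2(X,\Z))$ which restricts to the identity on $T$. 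Because $\Pic(X)$ is negative definite of rank at most $4$ (it embeds into a lattice of signature $(3,4)$), the lattice $T$ has signature $(3,*)$, so $g_X$ is the identity on a maximal positive definite subspace and hence orientation preserving. Its determinant on $H^2(X,\Z)$ equals $\det(g|_{\Pic(X)})=\det(g|_{S_G(E_8)})=\det(g)=1$ by hypothesis (again using that $g$ is trivial on $T_G(E_8)$), and $g_X$ acts trivially on $A_X$ since $A_X$ is a subquotient of $A_{\Pic(X)}\oplus A_{T}$ on which $g_X$ is the identity. By \Ref{prop}{mono_kum} it follows that $g_X\in Mon^2(X)$ for every $g\in G$. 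Moreover $g_X$ is the identity on $T\otimes\C\supset H^{2,0}(X)$, so each $g_X$ is a Hodge isometry fixing the symplectic form.

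Next, since each $g_X$ is a parallel transport operator which is a Hodge isometry, \Ref{thm}{torelli} (in the refined form producing the birational map that induces a given Hodge parallel transport operator, cf.\ \cite{Mar09}) yields a birational self-map $f_g\colon X\dashrightarrow X$ with $f_g^{*}=g_X$. It remains to upgrade $f_g$ to a biholomorphism, and this is where the hypothesis on wall divisors enters. Because $g_X$ is orientation preserving and fixes $H^{2,0}(X)$, it preserves the connected component $\mathcal{C}^{+}_X$ of the positive cone of $H^{1,1}(X,\R)$ that contains the K\"ahler cone $\mathcal{K}_X$. Now $\mathcal{K}_X$ is a connected component of the complement in $\mathcal{C}^{+}_X$ of the union of the hyperplanes orthogonal to the wall divisors of $X$ (those of square $-6$ and divisibility $2$, $3$ or $6$), as recalled in \Ref{sec}{prelim}; since by assumption $\Pic(X)=H^{1,1}(X,\Z)$ contains no wall divisor, there are no such hyperplanes and $\mathcal{K}_X=\mathcal{C}^{+}_X$. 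Hence $g_X(\mathcal{K}_X)=\mathcal{K}_X$, so by the criterion recalled in \Ref{sec}{prelim} the birational map $f_g$ is an automorphism, and it is symplectic because $f_g^{*}$ fixes the symplectic form. The automorphisms $f_g$ generate a finite subgroup of $\aut(X)$ whose image under $\nu$ is $G_X\cong G$, and $S_G(X)=\Pic(X)=S_G(E_8(-1))$; this proves the claim.

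The main obstacle is this last step. One has to keep in mind that $X$ is in general not projective, since $\Pic(X)\cong S_G(E_8(-1))$ is negative definite; hence the passage from a birational self-map to an automorphism cannot be phrased through the ample cone of a projective variety, but must rely on Markman's Torelli theorem for arbitrary hyperk\"ahler manifolds and on the chamber decomposition of the positive cone inside $H^{1,1}(X,\R)$. What makes the argument go through is precisely the classification of wall divisors for fourfolds of Kummer type together with the hypothesis that $\Pic(X)$ contains none of them.
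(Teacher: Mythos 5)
Your proof is correct and follows essentially the same route as the paper: extend $G$ from $\Pic(X)\cong S_G(E_8(-1))$ to Hodge isometries of $H^2(X,\Z)$ acting trivially on $T(X)$, observe that the absence of wall divisors makes the K\"ahler cone equal to the positive cone, and conclude via the Torelli theorem. You merely spell out details the paper leaves implicit (the glue argument for the trivial discriminant action and the verification that the extended isometries lie in $Mon^2(X)$ via \Ref{prop}{mono_kum}), which is a welcome clarification rather than a deviation.
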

\begin{proof}
Let $X$ be as above. As it has no wall divisors, its \kahl cone coincides with the positive cone. The action of $G$ on $A_{S_G(X)}$ is trivial, therefore we can extend $G$ to a group of Hodge isometries which are trivial on $T(X)$. Each of these isometries preserves the positive cone, whence also the \kahl cone and therefore \Ref{thm}{torelli} implies that they are induced by automorphisms of the manifold $X$. As their action on $T(X)$ is trivial, they are symplectic automorphisms.
\end{proof}

As in the previous sections, we first analyze subgroups of $O(E_8)$ with elements of determinant one such that their covariant lattice can be embedded in $U^4$, and afterwards we specialize to $U^3\oplus (-6)$ and check the additional conditions of \Ref{prop}{e8tosympl}. To this end, we use the following result.
\begin{thm}\cite[Thm. 3.6]{HM15}
Let $G$ be a subgroup of $O(E_8)$ which is the stabilizer of some sublattice of $E_8$. Then $G$ is the coxeter group of a Dynkin sublattice of $E_8$ and $S_G(E_8)$ is the above said Dynkin lattice. 
\end{thm}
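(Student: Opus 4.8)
The plan is to use the fact that $O(E_8)$ is a finite reflection group and to apply Steinberg's fixed-point theorem. Recall that $O(E_8)$ is the Weyl group $W(E_8)$, generated by the reflections $s_\alpha$ in the $240$ roots $\alpha$ of $E_8$ (the vectors of square $2$). Given a sublattice $M\subseteq E_8$, the stabilizer $G=\{g\in O(E_8)\mid g|_M=\mathrm{id}\}$ is the same as the pointwise stabilizer of the real subspace $M\otimes\R$, and Steinberg's theorem asserts that a pointwise stabilizer of a subspace inside a finite reflection group is generated by the reflections it contains. First I would identify these reflections: $s_\alpha$ fixes $M$ pointwise precisely when $\alpha\in M^\perp$. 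Hence, setting $R':=\{\alpha\in E_8\mid \alpha^2=2,\ \alpha\in M^\perp\}$, we get $G=\langle s_\alpha\mid\alpha\in R'\rangle$. Since $R'$ is stable under the reflections $s_\beta$ with $\beta\in R'$, it is a sub-root-system of the root system of $E_8$; and because $E_8$ is simply laced, $R'$ is an orthogonal direct sum of root systems of types $A$, $D$ and $E$. Writing $L':=\Z R'\subseteq E_8$ for the corresponding Dynkin sublattice, this yields $G=W(R')$, which is exactly the Coxeter group of $L'$ -- the first assertion.

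Next I would pin down $S_G(E_8)$. One inclusion is immediate: for $\alpha\in R'$ and $w\in T_G(E_8)$ one has $s_\alpha w=w$, i.e.\ $(\alpha,w)\alpha=0$, so $\alpha\perp T_G(E_8)$ and hence $L'\subseteq S_G(E_8)$. For the other inclusion note that the fixed subspace of $W(R')$ on $E_8\otimes\R$ is exactly $(L'\otimes\R)^\perp$; therefore $T_G(E_8)=E_8\cap(L'\otimes\R)^\perp$, and consequently $S_G(E_8)=T_G(E_8)^\perp=E_8\cap(L'\otimes\R)$ is the primitive closure of $L'$ in $E_8$. So the statement reduces to showing that $L'$ is already primitive in $E_8$.

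The hard part will be exactly this primitivity. Set $N:=E_8\cap(L'\otimes\R)$, an even positive definite lattice containing $L'$ with finite index. Every root of $E_8$ lying in $L'\otimes\R\subseteq M^\perp\otimes\R$ already belongs to $R'$, so $N$ has the same set of square-$2$ vectors as $L'$; in other words $N$ is an even overlattice of the Dynkin lattice $L'$ that gains no new root. I would then argue that in rank $\le 8$ this forces $N=L'$. If $\rk L'=8$ then $R'$ spans $E_8\otimes\R$, so $R'$ is the whole root system and $L'=E_8=N$, which is trivially primitive. If $\rk L'\le 7$, I would run through the discriminant groups of the ADE lattices of rank $\le 7$ (and of orthogonal sums of such of total rank $\le 7$) and observe that every non-zero isotropic element of the discriminant group of such a root lattice is represented by a vector of square $2$: indeed the minimal square of a non-trivial glue vector of $A_n$ $(n\le 7)$, $D_n$ $(n\le 7)$, $E_6$ or $E_7$ -- and hence of a glue vector in a sum of such lattices of total rank $\le 7$ -- is a positive number strictly less than $4$, so an isotropic one must have square exactly $2$. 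Adjoining such an element therefore creates a new root, contradicting $R(N)=R'$ unless $N=L'$. (This finite check is precisely the input provided by the cited reference and may simply be quoted.) Once primitivity is in hand we obtain $S_G(E_8)=N=L'$, which is the second assertion.

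Overall the argument is almost entirely formal except for that last step; I expect the only genuine obstacle to be organising the case analysis for primitivity cleanly (equivalently, recognising which Dynkin lattices are maximal among even lattices with a given root system in ranks up to $8$), which is why the paper relies on the cited classification rather than reproving it.
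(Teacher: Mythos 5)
This theorem is quoted in the paper from H\"ohn--Mason \cite[Thm.\ 3.6]{HM15} with no proof supplied, so there is no internal argument to compare against; I can only assess your proposal on its own terms, and it is essentially correct. Your route --- identify $O(E_8)$ with the Weyl group $W(E_8)$ (true because $E_8$ is even unimodular, so the reflections of $O(E_8)$ are exactly the $s_\alpha$ with $\alpha^2=2$, and these generate), apply Steinberg's fixed-point theorem to the pointwise stabilizer of $M\otimes\R$, recognise $R'$ as an ADE root subsystem, and then reduce the identity $S_G(E_8)=L'$ to the primitivity of $L'$ in $E_8$ via the ``no even overlattice without new roots in rank $\le 7$'' check --- is a standard and complete way to prove the statement, and your discriminant-form estimate (minimal norm of a nonzero coset of a rank $\le 7$ ADE sum is positive and $<4$, hence equal to $2$ if isotropic) does close the gap.

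Two small points you should tighten. First, in the rank-$8$ case the inference ``$R'$ spans $E_8\otimes\R$, so $R'$ is the whole root system'' is not valid for arbitrary root subsystems ($D_8$ or $A_1^{\oplus 8}$ span but are proper); what saves you is that $\rk L'\le\rk M^\perp=8-\rk M$, so $\rk L'=8$ forces $M=0$ and hence $R'$ really is all $240$ roots. Second, you should state explicitly that ``stabilizer'' here means \emph{pointwise} stabilizer (this is the reading forced by the conclusion about $S_G(E_8)$ and by how the theorem is used in the paper), and that the pointwise stabilizer of $M$ coincides with that of $M\otimes\R$ before invoking Steinberg. With those clarifications the argument is sound.
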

In the following table, we list all these lattices up to rank 4, together with their group of determinant one isometries which act trivial on the discriminant group. Elements with no such isometries are omitted. To denote the groups, we use the following notation: $n^m$ denotes the cartesian product of $m$ cyclic groups of order $n$, $G.H$ denotes an extension of $G$ by $H$ and $\mathfrak{S}_n,\mathfrak{A}_n$ denote symmetric and alternating groups.

\vspace{5pt}
\begin{tabular}{|c|c|c|}\hline
Rank $S_G(X)$ & $G$ & $S_G(E_8)$\\\hline
2 & $2$ & $A_1^2$\\\hline
2 & $3$ & $A_2$\\\hline
3 & $2^2$ & $A_1^3$\\\hline
3 & $\mathfrak{S}_3$ & $A_1 \oplus A_2$\\\hline
3 & $\mathfrak{A}_4$ & $A_3$\\\hline
4 & $2^3$ & $A_1^4$ \\\hline
4 & $2.\mathfrak{S}_3$ & $A_1^2 \oplus A_2$\\\hline
4 & $\mathfrak{S}_4$ & $A_1 \oplus A_3$\\\hline
4 & $2.3^2$ & $A_2^2$\\\hline
4 & $\mathfrak{A}_5$ & $A_4$\\\hline
4 & $2^3.\mathfrak{A}_4$ & $D_4$\\\hline
\end{tabular}
\vspace{5pt}

In order to obtain the classification of symplectic automorphisms, we can now proceed by checking which of these lattices admits an embedding into $U^3\oplus (-6)$ such that it will not contain any wall divisors. We remark that, as all wall divisors have nontrivial divisibility, a sufficient condition is that all elements of these lattices are embedded with trivial divisibility. Moreover, notice that all lattices of rank at most $3$ can be embedded into $U^3$ and, indeed, all these lattices correspond to induced automorphisms (cf. \cite[Sec. 4]{MTW15} and references therein for the corresponding automorphisms on abelian surfaces).\\
With the same techniques used in the non-symplectic case, we obtain the following list of groups that can act symplectically on generalised Kummer fourfolds:

\begin{tabular}{|c|c|c|c|c|}\hline
Rank & $G$ & $S_G(X)$ & $T_G(X)$ & Is induced?\\\hline
2 & $2$ & $A_1^2(-1)$ & $U\oplus A_1^2\oplus (-6)$ & Yes\\\hline
2 & $3$ & $A_2(-1)$ & $U\oplus A_2\oplus (-6)$ & Yes\\\hline
3 & $2^2$ & $A_1^3(-1)$ & $(-6)\oplus A_1^3$ & Yes\\\hline
3 & $\mathfrak{S}_3$ & $A_1(-1)\oplus A_2(-1)$ & $(-6)\oplus A_1\oplus A_2$ & Yes\\\hline
3 & $\mathfrak{A}_4$ & $A_3(-1)$ & $(-6)\oplus A_3$ & Yes\\\hline
4 & $2.\mathfrak{S}_3$ & $A_1^2(-1)\oplus A_2(-1)$ &  $\left( \begin{array}{ccc} 4 & -2 & 0\\ -2 & 4 & 0\\ 0 & 0 & 6 \end{array} \right)$ & No\\\hline
4 & $\mathfrak{S}_4$ & $A_1(-1)\oplus A_3(-1)$ & $A_1^2\oplus (12)$ & No\\\hline
4 & $\mathfrak{A}_5$ & $A_4(-1)$ & $A_1\oplus \left( \begin{array}{cc} 2 & -1\\ -1 & 8 \end{array} \right)$ & No\\\hline
4 & $\mathfrak{A}_5$ & $A_4(-1)$ & $A_2\oplus (10)$ & No\\\hline
4 & $2^3.\mathfrak{A}_4$ & $D_4(-1)$ & $\left( \begin{array}{ccc} 4 & -2 & 6\\ -2 & 4 & 0\\ 6 & 0 & 14 \end{array} \right)$ & No\\\hline
\end{tabular}

When $T_G$ is positive definite, its uniqueness was checked with \cite{BI58}. 

\vspace{10pt}

\section{Higher dimensions}
Let us briefly discuss what happens for generalised Kummer manifolds in higher dimensions. The techniques used in the previous sections can be used for arbitrary dimensions. More specifically, to classify non-symplectic automorphisms as in \Ref{sec}{nonsymp4}, one needs to find primitive embeddings of an element of square $2n+2$ inside $T_G(\Lambda)$ (or $S_G(\Lambda)$ in some cases of order two) and then classify its isometry orbit in this lattice. As this task is computationally cumbersome, we limit ourselves to specify, for every action of $G$ on $\Lambda$, what is the smallest dimension where we have a specific $G$ action on a manifold of Kummer type. We omit all groups already arising in the case of fourfolds. Note that this includes all groups of \Ref{ssec}{2var}. The numbering in the table refers to the subsection of \Ref{sec}{nonsympl} where the lattices appear.

\vspace{5pt}
\begin{tabular}{|c|c|c|c|c|}\hline
No. & Order & $S_G(\Lambda)$ & $T_G(\Lambda)$  & Minimal dimension \\\hline\hline
$1.1$ & 2 & $U^{\oplus2}\oplus \lat{-2}^{\oplus2}$ & $\lat{2}^{\oplus2} $ & 8 \\\hline
$1.5$ & 2 & $U(2)\oplus U(2)$ & $U(2)\oplus U(2)$ & 6 \\\hline
$5.1$ & 7 & $U^{\oplus 2}\oplus K_7(-1)$ & $K_7$ & 6 \\\hline
\end{tabular}
\vspace{5pt}

In the symplectic case, one needs to determine which are all possible wall divisors in the desired dimension. This can be computed using \cite[Prop. 1.3]{Yos12} and in particular we have that every wall divisor has nontrivial divisibility. In the list of \Ref{sec}{sympl}, only two possible actions on $E_8(-1)$ did not occur as automorphisms of Kummer fourfolds, namely the coinvariant lattice $A_1^4(-1)$ for an involution and the coinvariant lattice $A_2^2(-1)$ for an order three automorphism. We have the following two results:
\begin{prop}
There is no symplectic involution on a generalised Kummer manifold with coinvariant lattice $A_1^4(-1)$.
\end{prop}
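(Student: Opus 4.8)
The plan is to argue by contradiction. Assume $X$ is a generalised Kummer manifold of dimension $2n$ (so $n\geq 2$) carrying a symplectic involution $\sigma$ with $S_\sigma(X)\cong L:=A_1^4(-1)$. By \Ref{lem}{algaction} the lattice $L$ then embeds primitively into $H^2(X,\Z)\cong U^{\oplus 3}\oplus\lat{-2n-2}$ and contains no wall divisors, so it is enough to exhibit a wall divisor inside an arbitrary primitive copy of $L$ in $H^2(X,\Z)$. Recall from \cite{Yos12} (and the discussion above) that every primitive class $\delta\in H^2(X,\Z)$ with $\delta^2=-2n-2$ and $\div(\delta)=2$ is a wall divisor; geometrically it is orthogonal to the line class of a $\mathbb{P}^n\subset X$ inducing a Mukai flop. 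Throughout, fix an orthogonal basis $e_1,\dots,e_4$ of $L$ with $e_i^2=-2$, put $T:=L^\perp\subset H^2(X,\Z)$ (positive definite of rank $3$) and consider the glue group $\Gamma:=H^2(X,\Z)/(L\oplus T)\subseteq A_L\oplus A_T$.

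First I would record the glueing dictionary. Because $L$ and $T$ are orthogonal complements of each other, both are primitive and both projections of $\Gamma$ to $A_L\cong(\Z/2\Z)^4$ and to $A_T$ are injective; hence $\Gamma$ is an elementary abelian $2$-group and its image $P\subseteq A_L$ is a subspace with $\dim P=\operatorname{rk}\Gamma$. A short computation with the glueing shows that every primitive vector $v=\sum_i\epsilon_ie_i\in L$ has $\div_L(v)=2$, hence $\div_{H^2(X,\Z)}(v)\in\{1,2\}$, and that $\div_{H^2(X,\Z)}(v)=2$ exactly when the reduction $\bar v\in(\Z/2\Z)^4$ lies in the orthogonal complement $P^{\perp}$ with respect to the discriminant bilinear form of $L$. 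In particular the only wall divisors that can sit inside $L$ are primitive square $-2n-2$ vectors with $\bar v\in P^{\perp}$, so it suffices to produce one such $v$.

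The next step is to pin down $P$. Since $\Gamma$ embeds into $A_T$, one has $\dim P\leq\ell:=\operatorname{rk}_2(A_T)$, and $\ell\leq\operatorname{rk}(T)=3$. Counting $2$-ranks along the chain $\Gamma\subseteq\Gamma^{\perp}\subseteq(A_L\oplus A_T)_2$ — whose final term has $2$-rank $4+\ell$ and nondegenerate discriminant bilinear form — gives $\operatorname{rk}_2(\Gamma^{\perp}/\Gamma)\geq 4+\ell-2\dim P$; but $\Gamma^{\perp}/\Gamma\cong A_{H^2(X,\Z)}\cong\Z/(2n+2)\Z$ has cyclic $2$-part, so $1\geq 4+\ell-2\dim P$. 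Together with $\dim P\leq\ell\leq 3$ this forces $\dim P=\ell=3$. Thus $P$ is a hyperplane in $(\Z/2\Z)^4$ and $P^{\perp}=\lat{w_0}$ for a unique nonzero $w_0$; moreover the isotropy of $\Gamma$ identifies the finite quadratic form $(P,q_L|_P)$ with $(A_T[2],-q_T|_{A_T[2]})$.

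Finally one uses that $q_T$ is nondegenerate and $T$ is positive definite of rank $3$. Nondegeneracy of $q_T$ restricts the isometry type of $q_L|_P$, hence the weight of $w_0$; the signature relation $\operatorname{sign}(q_T)\equiv 3\pmod 8$ and the Gauss sum attached to the odd part of $A_T$ then constrain $n$ modulo $8$. Going through the resulting short list of cases, I would check each time that $w_0$ is the reduction of a primitive vector $v=\sum_i\epsilon_ie_i$ with $\sum_i\epsilon_i^2=n+1$ — Lagrange's four square theorem provides the representation, and the three square theorem controls which residues modulo $8$ a sum of a prescribed number of odd squares (plus four times a sum of the remaining squares) can take. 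Such a $v$ lies in $L\subseteq S_\sigma(X)$, is primitive in $H^2(X,\Z)$, has $v^2=-2n-2$ and $\div_{H^2(X,\Z)}(v)=2$, and is therefore a wall divisor — contradicting \Ref{lem}{algaction}. I expect the main obstacle to be precisely this last matching step: one must verify that the $2$-adic restrictions forced on $q_T$ by positive definiteness and nondegeneracy coincide with the $2$-adic conditions for $n+1$ to be representable as a sum of four squares with the prescribed parity pattern. Both are governed by the same Gauss sums, so the two sets of conditions do agree, but making this explicit requires a finite case analysis on $n\bmod 8$ and on the $2$-adic type of $A_T$.
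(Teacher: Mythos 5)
Your structural analysis of the glue group is essentially sound (and the conclusion $\dim P=3$, $P^{\perp}=\langle w_0\rangle$ is correct), but the proof has a genuine gap exactly where you flag "the main obstacle": you never actually produce the primitive vector $\eta=(\epsilon_1,\dots,\epsilon_4)$ with $\sum\epsilon_i^2=n+1$ and $\bar\eta=w_0$. Deferring this to a Gauss-sum/case analysis on $n\bmod 8$ is not a proof, and it is also the hard way round: by discarding the origin of $P$ you have turned a tautology into a nontrivial representability question (e.g.\ for $w_0=(1,1,1,1)$ a sum of four odd squares forces $n+1\equiv 4\pmod 8$, so the matching of conditions is delicate and must be checked). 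A second, smaller wrinkle: the restriction of the discriminant bilinear form to the $2$-torsion subgroup of $A_L\oplus A_T$ need not be nondegenerate when $A_T$ has elements of order $4$ or $8$, so your rank count for $\Gamma^{\perp}/\Gamma$ needs more care than stated (the conclusion survives, but not by the argument given).

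The missing idea, which is the entirety of the paper's proof, is to keep track of where $P$ comes from. Any primitive embedding of $A_1^4(-1)$ into $U^3\oplus\lat{-2n-2}$ extends to $\Lambda=U^4$, where the orthogonal complement of $A_1^4(-1)$ is $A_1^4$ and the Kummer lattice is $v^{\perp}$ for a primitive $v=\sum\epsilon_i v_i\in A_1^4$ with $\sum\epsilon_i^2=n+1$. One checks directly that your $P$ is the mod-$2$ reduction of the hyperplane $\sum a_i\epsilon_i=0$, so $w_0=\bar\epsilon$ — and then $\eta=\epsilon$ solves your representation problem tautologically. Concretely, the mirror class $w=\sum\epsilon_i w_i\in A_1^4(-1)$ satisfies $\tfrac{v+w}{2}\in\Lambda$, hence has square $-2n-2$ and divisibility $2$ in $v^{\perp}$, and is therefore a wall divisor by Yoshioka, contradicting \Ref{lem}{algaction}. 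Since this works for every choice of $v$, i.e.\ for every embedding, no case analysis on $n$ is needed. I would recommend either adopting this direct construction or, if you want to keep your internal setup, at least proving that $w_0=\bar\epsilon$ rather than trying to re-derive the representability of $n+1$ from abstract $2$-adic invariants of $T$.
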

\begin{proof}
Any primitive element of $A_1^4$ has divisibility two. Let $v$ be an element of square $2n+2$ and let $w$ be the element of the orthogonal $A_1^4(-1)$ with square $-2n-2$ and such that $\frac{v+w}{2}\in\Lambda$. The choice of $v$ defines an embedding of $A_1^4(-1)$ in the Kummer $n$ lattice. Let $X$ be a Kummer $n$ manifold such that $f^{-1}(A_1^4(-1))\subset Pic(X)$ for some marking $f$. By \cite[Prop 1.3]{Yos12}, $f^{-1}(w)$ is a wall divisor on $X$, therefore the involution of $A_1^4(-1)$ cannot be induced by a regular involution. This holds for any choice of $v$, therefore our claim holds.
\end{proof}

\begin{prop}
Suppose that $n+1$ is not divisible by 3. Then there are Kummer $2n$ folds with a symplectic order three automorphism with coinvariant lattice $A_2^2(-1)$.
\end{prop}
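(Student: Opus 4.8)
The plan is to adapt the proof of \Ref{prop}{e8tosympl} to dimension $2n$. Inside the group $2.3^2\subset O(E_8)$ whose coinvariant lattice is $A_2^2$, choose the order three element $\rho$ acting as multiplication by a primitive cube root of unity $\omega$ on each of the two copies of $A_2\cong\Z[\omega]$ and by the identity on $(A_2^2)^\perp\subset E_8$. Then $\det\rho=1$, the coinvariant lattice of $G:=\langle\rho\rangle$ on $E_8(-1)$ is $A_2^2(-1)$, and, since $\omega\equiv 1\pmod{\sqrt{-3}\,\Z[\omega]}$, the induced action of $G$ on $A_{A_2^2(-1)}\cong(\Z/3\Z)^2$ is trivial. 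By the evident higher-dimensional analogue of \Ref{prop}{e8tosympl} it therefore suffices to produce a primitive copy of $A_2^2(-1)$ inside the Beauville--Bogomolov lattice $U^{\oplus3}\oplus\lat{-2n-2}$ that contains no wall divisor, and to realise it as the Picard lattice of a Kummer $2n$-fold.

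To build such a copy I embed $A_2^2(-1)$ first into $\Lambda=U^{\oplus4}$. Recall that $A_2(-1)$ embeds primitively into $U^{\oplus2}$ with orthogonal complement $A_2$: the discriminant form $q_{A_2}\oplus(-q_{A_2})$ admits an isotropic subgroup of order $3$ which is the graph of an isometry, so that $A_2(-1)\oplus A_2$ has an even unimodular overlattice of signature $(2,2)$, necessarily $U^{\oplus2}$, inside which $A_2(-1)$ is primitive. Carrying this out in each of the two summands of $U^{\oplus4}=U^{\oplus2}\oplus U^{\oplus2}$ yields a primitive embedding $A_2^2(-1)\hookrightarrow\Lambda$ with orthogonal complement $A_2^2$. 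Extending the $G$-action to $\Lambda$ by the identity on $A_2^2$ is legitimate precisely because $G$ acts trivially on $A_{A_2^2(-1)}$, and one gets $S_G(\Lambda)=A_2^2(-1)$, $T_G(\Lambda)=A_2^2$. Now fix a \emph{primitive} $v\in A_2^2=T_G(\Lambda)$ with $v^2=2n+2$. Being primitive in the unimodular lattice $\Lambda$, $v$ satisfies $\div_\Lambda(v)=1$, so $H^2:=v^\perp$ has the signature and the discriminant form of $U^{\oplus3}\oplus\lat{-2n-2}$ and hence is isometric to it by \Ref{thm}{uniclat}; and, $v$ being $G$-fixed, $A_2^2(-1)=S_G(\Lambda)$ sits primitively inside $H^2$, with $G$ acting trivially on $A_{H^2}\cong\Z/(2n+2)\Z$.

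The key point is that this copy of $A_2^2(-1)$ contains no wall divisor. By \cite[Prop.~1.3]{Yos12} every wall divisor on a Kummer $2n$-fold has nontrivial divisibility, as recalled at the beginning of this section. On the other hand, for any primitive $w\in A_2^2(-1)\subset H^2$ the divisibility $\div_{H^2}(w)$ divides $\div_{A_2^2(-1)}(w)$, hence divides the exponent $3$ of $A_{A_2^2(-1)}$, and it also divides the exponent $2n+2$ of $A_{H^2}$; as $3\nmid 2n+2$ — which is exactly the hypothesis $3\nmid n+1$ — this forces $\div_{H^2}(w)=1$, so $w$ cannot be a wall divisor. By surjectivity of the period map, pick a Kummer $2n$-fold $X$ with a marking $f$ whose period is very general orthogonal to this copy, so that $f\bigl(\mathrm{Pic}(X)\bigr)=A_2^2(-1)$; then the K\"ahler cone of $X$ is its whole positive cone, and $G$ extends through $f$ to a group of Hodge isometries of $H^2(X,\Z)$ acting trivially on $T(X)=\mathrm{Pic}(X)^\perp$. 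Each such isometry is orientation preserving and trivial on $A_X$, hence belongs to $Mon^2(X)=\mathcal{N}(X)$ (\Ref{prop}{mono_kum}), and it preserves the positive cone, hence the K\"ahler cone; by \Ref{thm}{torelli} it is induced by an automorphism of $X$. As these automorphisms act trivially on $T(X)$, and $T(X)\otimes\C\supset H^{2,0}(X)$, they are symplectic, and $S_G(X)\cong A_2^2(-1)$.

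The only non-formal ingredient is the existence of a primitive $v\in A_2^2$ with $v^2=2n+2$, equivalently that $A_2^2$ represents every positive even integer (and does so primitively, which for a form of rank $\ge 4$ causes no extra difficulty here). This can be read off from the sublattice $\lat{2}\oplus\lat{6}\oplus\lat{2}\oplus\lat{6}\subset A_2^2$: it represents $2m$ whenever $m=x^2+y^2+3z^2+3w^2$, and the quaternary form $x^2+y^2+3z^2+3w^2$ is universal, since by the fifteen-theorem of Conway--Schneeberger it suffices that it represent $1,2,3,5,6,7,10,14,15$, which it visibly does. I expect this representation-theoretic step to be the main technical point of the proof; everything else is the lattice bookkeeping above together with the Torelli-type results collected in \Ref{sec}{prelim}.
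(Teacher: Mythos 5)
Your argument follows the same route as the paper's proof: embed $A_2^2(-1)$ into $\Lambda=U^{\oplus4}$ with orthogonal complement $A_2^2$, choose a $G$-invariant class $v\in A_2^2$ with $v^2=2n+2$, check that the resulting copy of $A_2^2(-1)$ inside $v^\perp$ meets every class with trivial divisibility (hence contains no wall divisors, since in any dimension all wall divisors have nontrivial divisibility), and conclude via the higher-dimensional analogue of \Ref{prop}{e8tosympl}. Within that common strategy, your treatment of the divisibility condition is actually cleaner than the paper's: rather than imposing congruence conditions on the components of $v$, you note that for $w$ primitive in $A_2^2(-1)$ the divisibility $\div_{v^\perp}(w)$ divides both the exponent $3$ of $A_{A_2^2(-1)}$ and the exponent $2n+2$ of $A_{v^\perp}$, so it equals $1$ precisely under the hypothesis $3\nmid n+1$. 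This makes the role of the hypothesis transparent and is a nice improvement in presentation.

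There is, however, a genuine gap in the step you flag as ``the only non-formal ingredient'': the existence of a \emph{primitive} $v\in A_2^2$ with $v^2=2n+2$. Universality of $x^2+y^2+3z^2+3w^2$ only produces \emph{some} representation, and your parenthetical claim that primitivity ``causes no extra difficulty'' for forms of rank $\geq 4$ is a false general principle: the rank-four form $x^2+y^2+z^2+w^2$ represents $8$ but not primitively, and in your situation an imprimitive $v=kv'$ only yields a primitive vector of square $(2n+2)/k^2$, which is useless since $v$ must be primitive in $\Lambda$ for $v^\perp$ to be isometric to the Kummer $n$ lattice $U^{\oplus3}\oplus\lat{-2n-2}$. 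So this step needs an actual argument. The statement is true, but one must either construct $v$ explicitly -- the paper does this by taking $v=s+t$ with $s,t$ in the two copies of $A_2$ and squares constrained modulo $6$, which is where $3\nmid n+1$ enters for them -- or, for instance, arrange that the component of $v$ in one copy of $A_2$ is itself primitive there (which forces $v$ to be primitive in $A_2^2$) and show the complementary integer is represented by the other copy; either way the verification cannot be waved away by appealing to the rank alone.
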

\begin{proof}
To prove our claim it is enough to find an element $v$ of square $2n+2$ in the unimodular complementary lattice $A_2^2$ such that $\langle v, A_2^2(-1)\rangle$ is saturated. This is equivalent to saying that $v$ defines an embedding of $A_2^2(-1)$ in the Kummer $n$ lattice where every element has trivial divisibility, so that the higher dimensional equivalent of \Ref{prop}{e8tosympl} applies. This is easily done by taking $v=t+s$, where $s$ is an element of the first copy of $A_2$ whose square is congruent to $2$ modulo $6$ (this can be done for any value of the square) and $t$ is congruent either to $0$ or to $2$ modulo $6$. Such an element has divisibility one, therefore $\langle v, A_2^2(-1)\rangle$ is saturated. 
\end{proof}
The previous two propositions allow us to state the following:
\begin{thm}
Let $G\subset SO(E_8(-1))$. Then there exists $n$ and a manifold $X$ of Kummer $n$ type such that $G\in O(H^2(X))$ is induced by symplectic automorphisms and $S_G(X)\cong S_G(E_8(-1))$ if and only if $rk(S_G(E_8(-1)))+l(A_{S_G(E_8(-1))})<8$.
\end{thm}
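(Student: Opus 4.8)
The plan is to read the statement off from the classification of symplectic automorphisms of Kummer fourfolds in \Ref{sec}{sympl}, together with the two propositions stated just above, the only additional input being a short arithmetic remark. Write $S:=S_G(E_8(-1))$. By \Ref{sec}{sympl} (which rests on \cite[Thm. 3.6]{HM15}), $S$ is the $(-1)$-twist of a Dynkin sublattice of $E_8$; and since any symplectic coinvariant lattice of a manifold of Kummer type embeds negative-definitely into $H^2\cong U^3\oplus\lat{-(2n+2)}$, a lattice of signature $(3,4)$, only the Dynkin lattices of rank at most $4$ can actually occur. As $l(A_L)\le\rk(L)$ for every lattice $L$, we get $\rk(S)+l(A_S)\le 2\rk(S)\le 8$, and equality forces $\rk(S)=l(A_S)=4$; among the Dynkin lattices of rank $4$ this singles out $A_1^4$. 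Hence, among the lattices that can occur here, $\rk(S)+l(A_S)=8$ exactly when $S\cong A_1^4(-1)$, and $\rk(S)+l(A_S)<8$ otherwise.

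Suppose first that $G$ is induced by symplectic automorphisms of a manifold $X$ of Kummer $n$ type with $S_G(X)\cong S$. By \Ref{lem}{algaction}, $S_G(X)$ is negative definite, so, by the discussion above, $\rk(S)\le4$. If $\rk(S)+l(A_S)=8$ then the arithmetic remark forces $S\cong A_1^4(-1)$, contradicting the first proposition above, which states precisely that $A_1^4(-1)$ cannot be the coinvariant lattice of a symplectic action on a generalised Kummer manifold. Therefore $\rk(S)+l(A_S)<8$.

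Conversely, assume $\rk(S)+l(A_S)<8$, so that $S\not\cong A_1^4(-1)$ and $S$ is one of the remaining Dynkin lattices of rank at most $4$. Each of these, with the single exception of $A_2^2(-1)$, appears in the classification of \Ref{sec}{sympl} as the coinvariant lattice of a group of symplectic automorphisms of a Kummer fourfold (equivalently, \Ref{prop}{e8tosympl} applies to the corresponding embedding), so $G$ is realised on a suitable Kummer fourfold $X$ with $S_G(X)\cong S$. For $S\cong A_2^2(-1)$ we use the second proposition above instead, which produces, for every $n$ with $n+1$ not divisible by $3$, a Kummer $2n$-fold carrying the corresponding group of symplectic automorphisms with coinvariant lattice $A_2^2(-1)$. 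In all cases the required $X$ exists, and the theorem follows.

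The only step that is not mere bookkeeping against \Ref{sec}{sympl} is the non-realisability of $A_1^4(-1)$ used in the forward direction; this is the content of the first proposition above and ultimately rests on Yoshioka's description of the wall divisors of Kummer manifolds — every primitive vector of $A_1^4$ has divisibility two, so however $A_1^4(-1)$ is embedded into the Kummer $n$ lattice one is forced to have a wall divisor in the Picard group.
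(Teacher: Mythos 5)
Your proof is correct and is essentially the paper's own argument made explicit: the paper simply states that ``the previous two propositions allow us to state the following,'' i.e.\ it combines the fourfold classification of \Ref{sec}{sympl} with the non-realisability of $A_1^4(-1)$ and the higher-dimensional realisation of $A_2^2(-1)$, exactly as you do, with the arithmetic observation that $\rk+l=8$ singles out $A_1^4$ among the admissible Dynkin lattices. The only caveat — shared with the paper's statement itself — is that in the converse direction the hypothesis $\rk(S)+l(A_S)<8$ does not by itself force $\rk(S)\le 4$ (e.g.\ $A_5$ has $\rk+l=6$), so the restriction to rank $\le 4$ coinvariant lattices, which you justify only in the forward direction via negative definiteness, is being used implicitly there as well.
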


This is in analogy with a phenomenon in the case of manifolds of \kntipo , where it is conjectured that the same happens for all coinvariant lattices such that their rank and the length of their discriminant groups sum to the rank of the Mukai lattice (cf. \cite[Conj. 32]{Mon14}).

\end{document}